\theoremstyle{plain}
\newtheorem{theorem}{Theorem}[section]
\theoremstyle{remark}
\newtheorem{remark}[theorem]{Remark}
\newtheorem{example}[theorem]{Example}
\theoremstyle{plain}
\newtheorem{corollary}[theorem]{Corollary}
\newtheorem{lemma}[theorem]{Lemma}
\newtheorem{proposition}[theorem]{Proposition}
\numberwithin{equation}{section}
\def\R{{\mathbb R}}
\def\C{{\mathbb C}}
\newcommand{\E}{{\mathbb E}}
\renewcommand{\P}{{\mathbb P}}
\newcommand{\F}{{\mathcal F}}
\renewcommand{\H}{{\mathcal H}}
\renewcommand{\a}{\alpha}
\newcommand{\g}{\gamma}
\newcommand{\e}{\varepsilon}
\renewcommand{\l}{\lambda}
\renewcommand{\o}{\omega}
\renewcommand{\O}{\Omega}
\newcommand{\LH}{L^2(0,T;H)}
\newcommand{\ggH}{\g(\LH,E)}
\newcommand{\tr}{{\rm Tr}}
\newcommand{\beq}{\begin{equation}}
\newcommand{\eeq}{\end{equation}}
\newcommand{\bal}{\begin{aligned}}
\newcommand{\eal}{\end{aligned}}
\newcommand{\ben}{\begin{enumerate}}
\newcommand{\een}{\end{enumerate}}
\newcommand{\bit}{\begin{itemize}}
\newcommand{\eit}{\end{itemize}}
\newcommand{\bth}{\begin{theorem}}
\newcommand{\bpr}{\begin{proposition}}
\newcommand{\epr}{\end{proposition}}
\newcommand{\ble}{\begin{lemma}}
\newcommand{\ele}{\end{lemma}}
\newcommand{\bpf}{\begin{proof}}
\newcommand{\epf}{\end{proof}}
\newcommand{\bex}{\begin{example}}
\newcommand{\eex}{\end{example}}
\newcommand{\bre}{\begin{example}}
\newcommand{\ere}{\end{example}}
\newcommand{\D}{{\mathcal D}}
\newcommand{\calL}{{\mathcal L}}
\newcommand{\n}{\Vert}
\newcommand{\one}{{{\bf 1}}}
\newcommand{\s}{^*}
\newcommand{\lb}{\langle}
\newcommand{\rb}{\rangle}
\newcommand{\limn}{\lim_{n\to\infty}}
\newcommand{\sumn}{\sum_{n\ge 1}}
\begin{document}

\title[Regularity for the Zakai equation]
{It\^o's formula in UMD Banach spaces and regularity of
solutions of the Zakai equation}

\author{Z. Brze\'zniak}
\address{Department of Mathematics\\University of York\\
York, YO10 5DD\\ England}
\email{zb500@york.ac.uk}

\author{J.M.A.M. van Neerven}
\address{Delft Institute of Applied Mathematics\\
Technical University of Delft \\ P.O. Box 5031\\ 2600 GA Delft\\The Netherlands}
\email{J.M.A.M.vanNeerven@tudelft.nl}

\author{M.C. Veraar}
\address{Delft Institute of Applied Mathematics\\
Technical University of Delft \\ P.O. Box 5031\\ 2600 GA Delft\\The Netherlands}
\email{M.C.Veraar@tudelft.nl}

\author{L. Weis}
\address{Mathematisches\, Institut\, I \\
\, Technische\, Universit\"at \, Karlsruhe \\
\, D-76128 \, Karls\-ruhe\\Germany} \email{Lutz.Weis@mathematik.uni-karlsruhe.de}

\thanks{The second and third named authors are supported by
a `VIDI subsidie' (639.032.201) in the `Vernieuwingsimpuls' programme
of the Netherlands Organization for Scientific Research (NWO).
The second named author is also supported by
a Research Training Network (HPRN-CT-2002-00281).
The fourth named author is supported by grants from the Volkswagenstiftung
(I/78593) and the Deutsche Forschungsgemeinschaft (We 2847/1-1).}

\keywords{Stochastic integration in Banach spaces, UMD spaces, It\^o formula,
stochastic evolution equations, Zakai equation,
non-autonomous equations, Wong-Zakai approximation}

\subjclass[2000]{Primary: 60H15 Secondary: 28C20, 35R60, 46B09, 60B11}

\date\today

\begin{abstract}
Using the theory of stochastic integration for
processes with values in a UMD Banach space developed recently by the authors,
an It\^o formula is proved which is
applied to prove the existence of strong solutions for a class of
stochastic evolution equations in UMD Banach spaces.
The abstract results are applied to prove regularity in space and time
of the solutions of the Zakai equation.
\end{abstract}

\maketitle

\section{Introduction}
In this paper we study space-time regularity of strong solutions of the
nonautonomous Zakai equation

\begin{equation}\label{eq:zakai}
\begin{aligned}
D_t U(t,x) & = A(t,x, D) U(t,x) + B(x,D) U(t,x) D_t W(t), \ \ t\in [0,T], x\in
\R^d
\\ U(0,x) & = u_0(x), \ \ x\in \R^d.
\end{aligned}
\end{equation}
Here
\[
\bal
A(t,x, D)
& = \sum_{i,j=1}^d a_{ij}(t,x) D_i D_j  + \sum_{i=1}^d q_{i}(t,x) D_i  + r(t,x), \\ B(x,D)
& =  \sum_{i=1}^d b_{i}(x) D_i + c(x).
\eal
\]
This equation arises in filtering theory,
and has been studied by many authors, cf. \cite{ATito,DPZ,Z}
and the references therein. It can be written as an abstract stochastic
evolution equation of the form
\begin{equation}\label{eq:abstr-zakai}\begin{aligned}
d U(t) &= A(t) U(t) dt + B U(t)\, d W(t), \qquad t\in [0,T],
\\ U(0) & = u_0.
\end{aligned}
\end{equation} Here the linear operators $A(t)$ are closed and densely
defined on a suitable Banach space $E$, the operator $B$ is a generator of a
$C_0$-group on $E$, and $W$ is a real-valued Brownian motion on some probability space
$(\O,\F,\P)$.

In the framework where $E$ is the Hilbert space $L^2(\R^d)$, the autonomous
version of the problem \eqref{eq:abstr-zakai} has been studied for instance by
Da Prato, Iannelli and Tubaro \cite{DPIT}
and Da Prato and Zabczyk \cite{DPZ}, who proved the existence of strong
solutions for this equation. By applying the results to the Zakai equation
\eqref{eq:zakai} and assuming that $u_0\in L^2(\R^d)$ almost surely, under
suitable regularity conditions on the coefficients the existence of solutions
with paths in
$$C([0,T];L^2(\R^d))\cap C((0,T];W^{2,2}(\R^d))$$
is established. If $u_0\in
W^{2,2}(\R^d)$ almost surely, then the solution has paths in
$C([0,T];W^{2,2}(\R^d))$.

In the slightly different setting of a Gelfand triple of separable Hilbert spaces, a class of problems including \eqref{eq:abstr-zakai} was studied with the same method by
Brze\'zniak, Capi\'nski and Flandoli \cite{BCF}. For Zakai's equation
they obtain solutions in the space $C([0,T];L^2(\R^d))\cap L^2(0,T;W^{1,2}(\R^d))$
for initial values $u_0\in L^2(\R^d)$.

Using different techniques,  Brze\'zniak \cite{Brz1} studied a class of equations containing the
autonomous case $A(t) \equiv A$ of \eqref{eq:abstr-zakai}
in the setting of martingale type $2$ spaces $E$. For $E = L^p(\R^d)$ with $2\le p<\infty$ and initial values
$u_0$ taking values almost surely in the Besov space $B_{p,2}^1(\R^d)$, the existence of solutions for the autonomous Zakai equation with paths
in $L^2(0,T;W^{2,p}(\R^d))$ and continuous moments in $B_{p,2}^1(\R^d)$ was obtained.
The techniques of
\cite{DPIT} cannot be extended to the setting of martingale type $2$
spaces $E$, since this would require an extension of the It\^o
formula for the duality mapping. Here the problem arises that if $E$
has martingale type $2$, then $E\s$ has martingale type $2$ only if
$E$ is isomorphic to a Hilbert space (see \cite{Kwa,Pi2}).

The method of \cite{DPIT} reduces the stochastic
problem \eqref{eq:abstr-zakai} to a certain deterministic problem. Crucial to
this approach is the use of It\^o's formula for bilinear forms on Hilbert
spaces. This method has been extended by Acquistapace and Terreni \cite{ATito}
to the nonautonomous case using the Kato-Tanabe theory \cite[Section 5.3]{Ta1}
for operators $A(t)$ with time-dependent domains. In this approach, a
technical difficulty arises due to the fact that in the associated
deterministic problem, certain operator valued functions are only H\"older
continuous, whereas the Kato-Tanabe theory requires their differentiability.
This difficulty is overcome by approximation arguments. The authors also note
that for the case where the domains $\D(A(t))$ do not depend on time, the
methods from \cite{DPIT} can be extended using the Tanabe theory \cite[Section
5.2]{Ta1}.

In the present paper we will extend the techniques of \cite{DPIT} to UMD
spaces $E$. This class of spaces includes $L^p(\R^d)$ for $p\in (1,\infty)$.
The extension relies on the fact that if $E$ is a UMD space, then $E\s$ is a
UMD spaces as well. Using the theory of stochastic integration in UMD spaces
developed recently in \cite{NVW1}, an It\^o formula is proved which is
subsequently applied to the duality mapping defined on the UMD space $E\times
E\s$, $(x,x\s)\mapsto \lb x,x\s\rb$. For the Zakai equation with initial value
$u_0\in L^p(\R^d)$ almost surely, where $1<p<\infty$, this results in
solutions with paths belonging to
$$C([0,T];L^{p}(\R^d))\cap C((0,T];W^{2,p}(\R^d)).$$
If
$u_0\in W^{2,p}(\R^d)$ almost surely,
the solution has paths in $C([0,T];W^{2,p}(\R^d)).$
For initial values in $L^p(\R^d)\cap L^\infty(\R^d)$ (respectively, in
$W^{2,p}(\R^d)\cap W^{2,\infty}(\R^d)$) for some $1\le p<\infty$,
the Sobolev embedding theorem then gives solutions with paths in
$C((0,T];C^{1,\a}(\R^d))$ (respectively,
 in $C([0,T];C^{1,\a}(\R^d))$)
for all $\a\in (0,1)$. If $u_0$ takes its values in a certain
interpolation space between $L^p(\R^d)$ and $W^{2,p}(\R^d)$, we
obtain that the solution has paths in
$$
C([0,T];L^{p}(\R^d))\cap L^q(0,T;W^{2,p}(\R^d)),$$
for appropriate $q\in [1, \infty)$.

Rather than using the Kato-Tanabe theory for operators $A(t)$ with
time-depen\-dent domains, we shall use the more recent Acquistapace-Terreni
theory developed in \cite{AT2}. The above-mentioned technical difficulty does
not occur then.

Another approach was taken by Krylov \cite{Kry}, who developed an $L^p$-theory
for a very general class of time-dependent parabolic stochastic partial
differential equations on $\R^d$ by analytic methods. For Zakai's equation
with initial conditions $u_0$ in the Bessel potential space $H^{r+2-\frac2p,p}(\R^d)$,
where $r\in \R$ and $2\le p<\infty$,
solutions are obtained with paths in $$L^p(0,T;H^{r+2,p}(\R^d)).$$

Further $L^p$-regularity results for the Zakai equation may be found in
\cite{Kot,KR,Par2}.

\section{It\^o's formula in UMD Banach spaces}

We start with a brief discussion of the $L^p$-theory of stochastic integration
in UMD Banach spaces developed recently in \cite{NVW1}.
We fix a separable real Hilbert space $H$ and a real Banach space $E$, and
denote by
$\calL(H,E)$ the space of all bounded linear operators from $H$ to $E$.

Let $(\O,\F,\P)$ be a probability space and let $F$ be a Banach space.
An $F$-valued {\em random variable} is a strongly
measurable mapping on $\O$ into $F$. The vector
space of all $F$-valued random variables on $\O$, identifying random
variables if they agree almost surely, is denoted by $L^0(\O;F)$. We
endow $L^0(\O;F)$ with the topology induced by convergence in
probability.

An $F$-valued {\em process} is a one-parameter family of random
variables with values in $F$. Often we identify a
process with the induced mapping $I\times \O\to F$, where $I$ is the
index set of the process. In most cases below, $I= [0,T]$.
A process $\Phi: [0,T]\times\O\to \calL(H,E)$ will be called {\em $H$-strongly
measurable} if for all $h\in H$ the process $\Phi h: [0,T]\times\O\to E$
defined by $\Phi h(t,\o) := \Phi(t,\o)h$, is strongly measurable.

For a separable real Hilbert space $\H$, let
$\g(\H,E)$ denote the operator ideal of $\g$-radonifying operators in
$\calL(\H,E)$. Recall that $R\in \calL(\H,E)$ is {\em $\g$-radonifying} if for
some (equivalently, for each) orthonormal basis $(h_n)_{n\ge 1}$ the Gaussian
sum $\sumn \g_n Rh_n$ converges in $L^2(\O;E)$. Here, $(\g_n)_{n\ge 1}$ is a
sequence of independent real-valued standard Gaussian random variables on $\O$.
We refer to \cite{DJT,NVW1, NW1,Nh} for its definition and
relevant properties. Below we shall be interested primarily in the
case $\H=L^2(0,T;H)$.

An $H$-strongly measurable process
$\Phi:[0,T]\times\O\to\calL(H,E)$ is said to {\em represent} a random variable
$X\in L^0(\O;\ggH)$ if for all $x^*\in E^*$, for almost all $\o\in\O$ we have
$\Phi\s(\cdot,\o)x\s \in\LH$ and
\begin{equation}\label{eq:repr}
\lb X(\o)f, x\s\rb = \int_0^T [f(t),
\Phi\s(t,\o)x\s]_H\,dt \ \ \hbox{for all $f\in\LH$}.
\end{equation}
Strong measurability of $X$ can usually be checked with \cite[Remark
2.8]{NVW1}. If $\Phi$ represents both $X_1, X_2\in L^0(\O;\ggH)$,
then $X_1=X_2$ almost surely by the Hahn-Banach theorem and the
essential separability of the ranges of $X_1$ and $X_2$. In the converse
direction, if both $\Phi_1$ and $\Phi_2$ represent $X\in
L^0(\O;\ggH)$, then $\Phi_1 h=\Phi_2 h$ almost everywhere on
$\o\times[0,T]$ for all $h\in H$ (to see this take $f =
1_{[a,b]}\otimes h$ in \eqref{eq:repr}; then use the Hahn-Banach
theorem and the strong $H$-measurability of $\Phi$) and therefore
$\Phi_1 = \Phi_2$ almost everywhere on $\o\times[0,T]$. It will
often be convenient to identify $\Phi$ with $X$ and we will simply
write $\Phi\in L^0(\O;\ggH)$.

From now on we shall assume that a filtration $(\F_t)_{t\in [0,T]}$ on $(\O,\F,\P)$ is given which satisfies the usual conditions.
A process $\Phi:[0,T]\times\O\to \calL(H,E)$ is called an {\em
elementary process adapted to $(\F_t)_{t\in [0,T]}$} if it can be written as
$$
\Phi(t,\omega) = \sum_{n=0}^N \sum_{m=1}^M \one_{(t_{n-1},t_n]\times
A_{mn}}(t,\omega) \sum_{k=1}^K h_k \otimes x_{kmn},
$$
where $0 \le t_0 < \dots< t_N \le T$ and the sets
$A_{1n},\dots,A_{Mn}\in \F_{t_{n-1}}$ are disjoint for each $n$
(with the understanding that  $(t_{-1},t_0] := \{0\}$ and
$\F_{t_{-1}} := \F_0$) and the vectors $h_1,\dots,h_K \in H$ are
orthonormal. For such $\Phi$ we define the stochastic integral
process with respect to $W_H$ as an element of $L^0(\O;C([0,T];E))$
as
\[
t\mapsto \int_0^{t} \Phi(t) \, d W_H(t) =
\sum_{n=0}^N \sum_{m=1}^M \one_{A_{mn}}(\omega) \sum_{k=1}^K (W_H(t_n\wedge t)h_k - W_H(t_{n-1}\wedge t)h_k) x_{kmn}
\]
Here $W_H$ is a
cylindrical Brownian motion. For a process
$\Phi:[0,T]\times\Omega\to \calL(H,E)$ we say that $\Phi$ is {\em
scalarly in $L^0(\O;\LH)$} if for all $x^*\in E^*$, for almost all
$\o\in\O$ we have $\Phi\s(\cdot,\o)x\s \in\LH$. The following result
from \cite{NVW1} extends the integral to a larger class of
processes.

\begin{proposition}\label{prop:main1loc} Assume that
$E$ is a UMD space and let $W_H$ be an $H$-cylindrical Brownian motion on
$(\O,\F,\P)$. For an $H$-strongly measurable and adapted process
$\Phi:[0,T]\times\Omega\to \calL(H,E)$ which is scalarly in $L^0(\O;\LH)$ the
following assertions are equivalent:
\begin{enumerate}
\item\label{sequence} there exist elementary adapted processes
$\Phi_n: [0,T]\times\O\to\calL(H,E)$ such that:
\begin{enumerate}
\item[(i)] for all $h\in H$ and $x\s\in E\s$,
$$\lb \Phi h,x\s\rb = \limn
\lb \Phi_n h, x\s \rb \ \ \hbox{in measure};$$
\item[(ii)]
there exists a process $\zeta\in L^0(\O;C([0,T];E))$ such that
$$ \zeta = \limn \int_0^\cdot \Phi_n(t)\,dW_H(t) \quad\hbox{in $L^0(\O;C([0,T];E))$}.$$
\end{enumerate}
\item\label{weak} There exists a process $\zeta\in L^0(\O;C([0,T];E))$ such that for all
$x\s\in E\s$,
$$ \lb \zeta,x\s\rb = \int_0^\cdot \Phi\s(t)x\s\,dW_H(t) \ \ \hbox{in
$L^0(\O;C[0,T])$}.$$
\item\label{gamma} $\Phi\in L^0(\O;\ggH)$.
\end{enumerate}
The processes $\zeta$ in \eqref{sequence} and \eqref{weak} are
indistinguishable and it is uniquely determined as an element of
$L^0(\O;C([0,T];E))$. It is a continuous local martingale starting
at $0$, and for all $p\in(1, \infty)$ there exists a constant
$0<C_{p,E}<\infty$ such that
\[C_{p,E}^{-1} \E \n \Phi\n_{\ggH}^p \leq \E\sup_{t\in [0,T]}\n\zeta(t)\n^p
\leq C_{p,E} \E \n \Phi\n_{\ggH}^p.\]
\end{proposition}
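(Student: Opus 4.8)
The plan is to prove first the two-sided $L^p$-estimate for a fixed exponent $p\in(1,\infty)$, under the temporary assumption that $\Phi\in\LppH$, and then to remove this integrability assumption by a localization argument which simultaneously establishes the equivalence of \eqref{sequence}, \eqref{weak} and \eqref{gamma}. For an elementary adapted $\Phi$ the integral is given by the explicit formula above and, for each $x\s\in E\s$, the scalar process $\lb\int_0^\cdot\Phi(t)\,dW_H(t),x\s\rb$ is a continuous martingale with quadratic variation $\int_0^\cdot\n\Phi\s(t)x\s\n_H^2\,dt$. The decisive tool is the decoupling inequality valid in UMD spaces (see \cite{NVW1} and the references therein): with $\widetilde W_H$ an independent copy of $W_H$, the quantities $\E\sup_{t\in[0,T]}\n\int_0^t\Phi\,dW_H\n^p$ and $\E\sup_{t\in[0,T]}\n\int_0^t\Phi\,d\widetilde W_H\n^p$ are comparable up to constants depending only on $p$ and $E$. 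Conditioning on the original $\sigma$-algebra turns the decoupled integral at time $T$ into a centered $E$-valued Gaussian random variable whose conditional second moment is, by the very definition of $\ggH$, the quantity $\n\Phi(\cdot,\o)\n_{\ggH}^2$; Kahane's inequality exchanges $L^2$ for $L^p$ and Doob's maximal inequality removes the supremum, so integrating out gives the two-sided estimate for elementary processes.

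Next I would extend this to arbitrary $H$-strongly measurable adapted $\Phi\in\LppH$ by approximating $\Phi$ in $\LppH$ by elementary adapted processes $\Phi_n$; here one must take care to preserve adaptedness, e.g.\ by projecting onto a dyadic filtration, truncating in the $\ggH$-norm, and mollifying in time. By the previous step $\int_0^\cdot\Phi_n\,dW_H$ is Cauchy in $L^p(\O;C([0,T];E))$, its limit $\zeta$ is independent of the approximating sequence, is a continuous $L^p$-martingale starting at $0$, and inherits the two-sided bound; testing against $x\s$ and using the one-dimensional theory identifies $\lb\zeta,x\s\rb$ with $\int_0^\cdot\Phi\s(t)x\s\,dW_H(t)$, and uniqueness of $\zeta$ in $L^0(\O;C([0,T];E))$ follows from Hahn--Banach together with the essential separability of the ranges involved. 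For the general statement one localizes: setting $\tau_n=\inf\{t\in[0,T]:\n\one_{[0,t]}\Phi\n_{\ggH}>n\}$ makes $\one_{[0,\tau_n]}\Phi$ bounded, hence in $\LppH$, the integrals agree on $[0,\tau_n]$ and patch to a process $\zeta\in L^0(\O;C([0,T];E))$ that is a continuous local martingale. This yields \eqref{gamma}$\Rightarrow$\eqref{weak}, and taking the $\Phi_n$ to be localized elementary approximants also \eqref{gamma}$\Rightarrow$\eqref{sequence}; the implication \eqref{sequence}$\Rightarrow$\eqref{weak} is immediate from (i) and the continuity in probability of the scalar It\^o integral.

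The hard part will be the implication \eqref{weak}$\Rightarrow$\eqref{gamma}. All one is handed is, for each $x\s$, the scalar quadratic variation $\int_0^T\n\Phi\s(t)x\s\n_H^2\,dt$, and this one-dimensional data must be upgraded to the genuinely Gaussian, vector-valued statement $\Phi\in\ggH$. My plan there is to localize so that $\zeta$ becomes an $L^2$-martingale, run the decoupling inequality in the reverse direction to produce a centered $E$-valued Gaussian random variable whose covariance is governed by $\Phi$, and then invoke the characterization of $\g$-radonifying operators, via the $\g$-Fubini type identification of $\ggH$ with the space of representable processes, to conclude $\Phi\in\ggH$. This reconstruction of Gaussian structure from scalar information is exactly the point at which the UMD hypothesis is unavoidable, so I expect it to be the main obstacle; the adaptedness-preserving density of elementary processes used above is a secondary technical nuisance rather than a conceptual one.
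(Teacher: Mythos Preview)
The paper does not prove this proposition at all: it is quoted verbatim as a known result from \cite{NVW1} (``The following result from \cite{NVW1} extends the integral to a larger class of processes''), so there is no in-paper proof to compare against. Your sketch is in fact a faithful outline of the argument given in \cite{NVW1}: decoupling in UMD spaces to obtain the two-sided $L^p$-estimate for elementary processes, adaptedness-preserving approximation in $\ggH$, localization via stopping times to pass from $\LppH$ to $L^0$, and the reverse decoupling step for \eqref{weak}$\Rightarrow$\eqref{gamma}. So your approach is correct and is essentially the same as the one the paper defers to.
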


A process $\Phi:[0,T]\times\O\to\calL(H,E)$ satisfying the equivalent
conditions of the theorem  will be called {\em stochastically integrable} with
respect to $W_H$. The process $\zeta$ is called the {\em stochastic integral
process} of $\Phi$ with respect to $W_H$, notation
$$ \zeta = \int_0^\cdot \Phi(t)\,dW_H(t).$$

The following lemma will be needed in Section \ref{appl} and
shows that condition \eqref{weak} in Proposition \ref{prop:main1loc} can be
weakened.

\begin{lemma}\label{Lpw*denseloc}
Let $E$ be a UMD Banach space and let $F$ be a dense subspace of $E^*$. Let
$\Phi:[0,T]\times\O\to \calL(H, E)$ be an $H$-strongly measurable and adapted
process such that for all $x^*\in F$, $\Phi^* x^*\in L^2(0,T;H)$ almost
surely. If there exists process $\zeta\in L^0(\O;C([0,T];E))$ such that for
all $x^*\in F$ we have
\begin{equation}\label{Fextendloc} \lb \zeta, x^*\rb =
\int_0^\cdot \Phi^*(s) x^* \, d W_H(s) \ \ \text{in} \ L^0(\O;C[0,T]),
\end{equation} then $\Phi$ is stochastically integrable with respect to $W_H$ and
\[\zeta =\int_0^\cdot \Phi(s) \, d W_H(s) \  \ \text{in} \ L^0(\O;C([0,T];E)).\]
\end{lemma}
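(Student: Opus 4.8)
The plan is to reduce the statement to condition \eqref{weak} of Proposition \ref{prop:main1loc} by showing that the weak identity \eqref{Fextendloc}, a priori only known on the dense subspace $F$, automatically propagates to all of $E^*$. The only thing that needs checking is that for arbitrary $x^*\in E^*$ one has $\Phi^*x^*\in L^2(0,T;H)$ almost surely, that $\Phi^*x^*$ is stochastically integrable against $W_H$ (as a real-valued, $H$-cylindrical integrand), and that the resulting one-dimensional stochastic integral equals $\lb\zeta,x^*\rb$. Once this is established, Proposition \ref{prop:main1loc}, implication \eqref{weak} $\Rightarrow$ \eqref{gamma}, gives that $\Phi\in L^0(\O;\ggH)$, hence $\Phi$ is stochastically integrable, and the uniqueness clause in that proposition identifies $\int_0^\cdot\Phi\,dW_H$ with $\zeta$.

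The key steps, in order. First, fix $x^*\in E^*$ and pick a sequence $(x_n^*)\subset F$ with $x_n^*\to x^*$ in $E^*$. For each $n$, $\lb\zeta,x_n^*\rb = \int_0^\cdot\Phi^*(s)x_n^*\,dW_H(s)$ in $L^0(\O;C[0,T])$ by hypothesis. Second, observe that $\lb\zeta,x_n^*\rb\to\lb\zeta,x^*\rb$ in $L^0(\O;C[0,T])$, since $\sup_{t}|\lb\zeta(t),x_n^*-x^*\rb|\le \n x_n^*-x^*\n_{E^*}\sup_t\n\zeta(t)\n$ and the right side tends to $0$ in probability. Hence the sequence of real It\^o integrals $\int_0^\cdot\Phi^*(s)x_n^*\,dW_H(s)$ is Cauchy in $L^0(\O;C[0,T])$, converging to $\lb\zeta,x^*\rb$. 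Third, this is precisely the hypothesis of the one-dimensional version of the It\^o isometry/It\^o isomorphism: convergence of the stochastic integrals of $\Phi^*x_n^*$ in $L^0(\O;C[0,T])$ forces $(\Phi^*x_n^*)_n$ to be Cauchy in $L^0(\O;L^2(0,T;H))$ — this is the scalar case of Proposition \ref{prop:main1loc}, or one may invoke the standard characterization of $L^0$-stochastic integrability for real-valued processes. Its limit must be $\Phi^*x^*$, because $\Phi^*x_n^*\to\Phi^*x^*$ pointwise on $[0,T]\times\O$ by continuity of $y^*\mapsto\Phi^*y^*$ together with $x_n^*\to x^*$, so along a subsequence the $L^0(\O;L^2(0,T;H))$-limit agrees with $\Phi^*x^*$. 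It follows that $\Phi^*x^*\in L^2(0,T;H)$ a.s.\ and $\int_0^\cdot\Phi^*(s)x^*\,dW_H(s) = \lim_n\int_0^\cdot\Phi^*(s)x_n^*\,dW_H(s) = \lb\zeta,x^*\rb$ in $L^0(\O;C[0,T])$. Fourth, since $x^*\in E^*$ was arbitrary, condition \eqref{weak} of Proposition \ref{prop:main1loc} holds, so $\Phi$ is stochastically integrable and $\zeta=\int_0^\cdot\Phi(s)\,dW_H(s)$ by the uniqueness assertion there.

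The main obstacle is the third step: passing from convergence of the scalar stochastic integrals in $L^0(\O;C[0,T])$ back to convergence of the integrands in $L^0(\O;L^2(0,T;H))$, which is a closedness property of the stochastic integral on the level of $L^0$. This requires a genuine (two-sided) estimate, not just the It\^o isometry on $L^2$; the right tool is the $p=1$ or the $L^0$-version of the two-sided inequality built into Proposition \ref{prop:main1loc} applied with $E=\R$ (where $\g(L^2(0,T;H),\R)\cong L^2(0,T;H)$), combined with a stopping-time argument to localize and pass from $L^p$ to $L^0$. Everything else — the continuity of the pairing with $\zeta$, extraction of a.e.-convergent subsequences, identification of the limit — is routine.
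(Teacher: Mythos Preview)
Your proposal is correct and follows essentially the same route as the paper: approximate an arbitrary $x^*\in E^*$ by $x_n^*\in F$, use the uniform continuity of $\lb\zeta,\cdot\rb$ to get convergence of the scalar stochastic integrals, then pass back to convergence of the integrands $\Phi^*x_n^*$ in $L^0(\O;L^2(0,T;H))$ and identify the limit. The only cosmetic difference is that the paper outsources your ``third step'' (the $L^0$-level closedness of the scalar stochastic integral) to a direct citation of \cite[Proposition~17.6]{Kal}, whereas you sketch how to recover it from Proposition~\ref{prop:main1loc} with $E=\R$ plus localization; both are legitimate.
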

\begin{proof}
By Proposition \ref{prop:main1loc}, it suffices to show that $\Phi^* x^*\in
L^2(0,T;H)$ almost surely and that\eqref{Fextendloc} holds for all $x^*\in
E^*$. To do so, fix $x^*\in E^*$ arbitrary and choose elements $x_n^*\in F$
such that $x^* = \limn x_n^*$ in $E\s$. Clearly we have $\lb \zeta, x^*\rb =
\limn \lb \zeta, x^*_n\rb$ in $L^0(\O;C[0,T])$. An application of
\cite[Proposition 17.6]{Kal} shows that the processes $\Phi^* x_n^*$ define a
Cauchy sequence in $L^0(\O;L^2(0,T;H))$. By a standard argument we obtain that
$\Phi\s x\s\in L^0(\O;L^2(0,T;H))$ and $\limn \Phi^*x_n^*=\Phi^*x^* $ in
$L^0(\O;L^2(0,T;H))$. By another application of \cite[Proposition 17.6]{Kal}
we conclude that
\[
\int_0^\cdot \Phi^*(s) x^* \, d W_H(s) = \limn \int_0^\cdot
\Phi^*(s) x^*_n \, d W_H(s) = \limn \lb \zeta, x_n^*\rb = \lb \zeta, x^*\rb
\]
in $L^0(\O;C[0,T])$.
\end{proof}

The next lemma defines a trace which will be needed in the statement of the
It\^o formula.

\begin{lemma}\label{hulptr}
Let $E,F,G$ be Banach spaces and let $(h_n)_{n\geq 1}$ be an orthonormal basis
of $H$. Let $R\in \g(H, E)$, $S\in \g(H, F)$ and $T\in \calL(E,\calL(F,G))$ be
given. Then the sum
\begin{equation}\label{trdef}
\tr_{R,S} T:=\sum_{n\geq 1} (T R h_n)(S h_n)
\end{equation}
converges in $G$ and does not depend on the choice of the orthonormal basis. Moreover,
\begin{equation}\label{trineq}
\|\tr_{R,S} T\|\leq \|T\| \|R\|_{\g(H,E)} \|S\|_{\g(H,F)}.
\end{equation}
\end{lemma}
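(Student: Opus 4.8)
The plan is to reduce the convergence of $\sum_{n\ge 1} (TRh_n)(Sh_n)$ in $G$ to a trace-class / Gaussian computation, using that $R$ and $S$ are $\gamma$-radonifying and that $T$ is bounded. First I would observe that $(TRh_n)(Sh_n)$ is a bilinear expression in the two sequences $(Rh_n)_n$ and $(Sh_n)_n$; the natural device is to introduce independent Gaussian sequences $(\gamma_n)_{n\ge 1}$ and $(\gamma_n')_{n\ge 1}$ on a (possibly enlarged) probability space and to recognize that $\sum_n (TRh_n)(Sh_n)$ is the expectation $\E\, \bigl(T(\sum_n \gamma_n Rh_n)\bigr)\bigl(\sum_m \gamma_m' Sh_m\bigr)$, because $\E \gamma_n \gamma_m' = \delta_{nm}$. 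Since $R\in\gamma(H,E)$ and $S\in\gamma(H,F)$, the Gaussian sums $\xi:=\sum_n \gamma_n Rh_n$ and $\eta:=\sum_m \gamma_m' Sh_m$ converge in $L^2(\O;E)$ and $L^2(\O;F)$ respectively, and moreover $\E\|\xi\|^2 = \|R\|_{\gamma(H,E)}^2$, $\E\|\eta\|^2=\|S\|_{\gamma(H,F)}^2$. The map $(x,y)\mapsto (Tx)(y)$ is bilinear and bounded from $E\times F$ to $G$ with norm at most $\|T\|$, hence jointly continuous; applying it to the convergent sums and using that $\xi,\eta$ have finite second moments gives, via the Cauchy--Schwarz inequality,
\[
\Bigl\|\E\, (T\xi)(\eta)\Bigr\| \le \E \|(T\xi)(\eta)\| \le \|T\|\, \E\bigl(\|\xi\|\,\|\eta\|\bigr) \le \|T\|\,\bigl(\E\|\xi\|^2\bigr)^{1/2}\bigl(\E\|\eta\|^2\bigr)^{1/2} = \|T\|\,\|R\|_{\gamma(H,E)}\,\|S\|_{\gamma(H,F)},
\]
which is exactly \eqref{trineq} once we know the series equals $\E(T\xi)(\eta)$.

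The step that needs care — and which I expect to be the main obstacle — is justifying the interchange of the series with the expectation, i.e. that $\sum_{n\ge 1}(TRh_n)(Sh_n)$ converges in $G$ and equals $\E(T\xi)(\eta)$. The cleanest route is to fix the basis, let $P_N$ denote the orthogonal projection onto $\mathrm{span}(h_1,\dots,h_N)$, and write the $N$-th partial sum as $\E\,\bigl(T(R P_N h)\text{-sum}\bigr)(\cdots)$; more precisely the $N$-th partial sum of the series is $\E\,(T\xi_N)(\eta_N)$ where $\xi_N=\sum_{n=1}^N\gamma_n Rh_n$, $\eta_N=\sum_{m=1}^N\gamma_m' Sh_m$. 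Since $\xi_N\to\xi$ in $L^2(\O;E)$ and $\eta_N\to\eta$ in $L^2(\O;F)$, boundedness of the bilinear map and Cauchy--Schwarz give $(T\xi_N)(\eta_N)\to (T\xi)(\eta)$ in $L^1(\O;G)$, hence $\E(T\xi_N)(\eta_N)\to\E(T\xi)(\eta)$ in $G$; this proves both convergence of the series and the identification of its sum, and the bound above then follows. Independence of $(\gamma_n)$ and $(\gamma_n')$ is used precisely to get $\E\gamma_n\gamma_m'=\delta_{nm}$, so that the cross terms in $(T\xi_N)(\eta_N)=\sum_{n,m=1}^N \gamma_n\gamma_m'(TRh_n)(Sh_m)$ vanish under $\E$ and only the diagonal survives.

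Finally, basis-independence follows automatically from this representation: the limit $\E(T\xi)(\eta)$ is manifestly independent of the chosen orthonormal basis, since $\xi$ and $\eta$ are the Gaussian random variables canonically associated with $R$ and $S$ (their distributions, being the $\gamma$-radonifying images of the canonical Gaussian cylinder measure on $H$, do not depend on the basis, and the joint law of $(\xi,\eta)$ is determined because the two families are independent). Alternatively one can argue directly: given two orthonormal bases $(h_n)$ and $(h_n')$ related by a unitary $U$ on $H$, substitute and use that $R\mapsto R\circ U$ and $S\mapsto S\circ U$ leave the Gaussian sums equidistributed; but invoking the probabilistic representation is shorter and avoids any convergence subtleties. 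The only genuinely delicate point throughout is the passage to the limit in the bilinear pairing, which is handled by the $L^2$-$L^2$-$L^1$ Hölder argument just described.
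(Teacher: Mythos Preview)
There is a genuine error in your Gaussian representation. You introduce \emph{independent} sequences $(\gamma_n)_{n\ge 1}$ and $(\gamma_n')_{n\ge 1}$ and then assert $\E\,\gamma_n\gamma_m' = \delta_{nm}$; but independence of the two sequences gives $\E\,\gamma_n\gamma_m' = \E\,\gamma_n\cdot\E\,\gamma_m' = 0$ for \emph{all} $n,m$. With your choice of $\xi$ and $\eta$ one has $\E\,(T\xi_N)(\eta_N)=0$ identically, not the partial sum of the series. The sentence ``Independence of $(\gamma_n)$ and $(\gamma_n')$ is used precisely to get $\E\gamma_n\gamma_m'=\delta_{nm}$'' is exactly backwards.

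The fix is to use the \emph{same} Gaussian sequence for both sums: set $\xi_N=\sum_{n=1}^N \gamma_n Rh_n$ and $\eta_N=\sum_{n=1}^N \gamma_n Sh_n$. Then $\E\,\gamma_n\gamma_m=\delta_{nm}$ holds because the $\gamma_n$ are i.i.d.\ standard Gaussians, and your $L^2$--$L^2$--$L^1$ H\"older argument for the passage to the limit goes through unchanged. This is precisely what the paper does (for finite-rank $S$ first, then by approximation). Your basis-independence argument also needs adjusting: with the same sequence, $\xi$ and $\eta$ are \emph{not} independent, but the pair $(\xi,\eta)=\sum_n \gamma_n(Rh_n,Sh_n)$ is the $E\times F$-valued Gaussian associated to the $\gamma$-radonifying operator $h\mapsto (Rh,Sh)$, and its law is basis-free; alternatively one can argue directly with finite-rank approximations as the paper does. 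Once this correction is made, your approach coincides with the paper's.
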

If $E=F$ we shall write $\tr_R := \tr_{R,R}$.

\begin{proof}
First assume that $S = \sum_{n=1}^{N} h_n \otimes y_n$ with $y_1,
\ldots, y_N\in F$. Then the convergence of the series in
\eqref{trdef} is obvious. Letting $\xi_R = \sum_{n=1}^{N} \g_n R h_n
$ and $\xi_S = \sum_{n=1}^{N} \g_n S h_n$ we obtain
$$
\|\tr_{R,S} T\| = \|\E T(\xi_R) (\xi_S)\|
\leq \|T\|(\E\|\xi_R\|^2)^{\frac12} (\E\|\xi_S\|^2)^{\frac12} \leq
\|T\|\|R\|_{\g(H,E)} \|S\|_{\g(H,F)}.
$$
Now let $S\in \g(H,F)$ be arbitrary. For each $N\geq 1$, let $P_N\in
\calL(H)$ denote the orthogonal projection on span$\{h_n: n\leq
N\}$. Letting $S_n = S \circ P_n$, we have $S = \limn S_n$ in
$\g(H,F)$. For all $m,n\geq 1$, we have
\[\|\tr_{R,S_n} T - \tr_{R,S_m} T\| = \|\tr_{R,S_n-S_m} T\|\leq \|T\| \|R\|_{\g(H,E)} \|S_n-S_m\|_{\g(H,F)}.\]
Therefore, $(\tr_{R,S_n} T)_{n\geq 1}$ is a Cauchy sequence in $G$,
and it converges. Clearly, for all $N\geq 1$, $\tr_{R,S_N} T =
\sum_{n=1}^N (T R h_n)(S h_n)$. Now the convergence of \eqref{trdef}
and the estimate \eqref{trineq} follow.

Next we show that the trace is independent of the choice of the
orthonormal basis $(h_n)_{n\geq 1}$. Let $(e_n)_{n\geq 1}$ be another orthonormal
basis for $H$. For $R = \sum_{m=1}^{M} h_m \otimes x_m$ with $x_1,
\ldots, x_M\in E$ and $S = \sum_{n=1}^{N} h_n \otimes y_n$ with $y_1,
\ldots, y_N\in F$, we have
\[\begin{aligned}
\sum_{k\geq 1}  T(R e_k) (S e_k) & = \sum_{k\geq 1} \sum_{m\geq 1}
\sum_{n\geq 1} [e_k,h_m] [e_k,h_n] T(R h_m) (S h_n)
\\ & = \sum_{m=1}^M \sum_{n=1}^N \sum_{k\geq 1} [e_k,h_m] [e_k,h_n] T(R h_m) (S h_n)
\\ & =\sum_{m=1}^M \sum_{n=1}^N \delta_{mn} T(R h_m) (S h_n) = \tr_{R,S} T.
\end{aligned}\]
The general case follows from an approximation argument as before.
\end{proof}

A function $f:[0,T]\times E\to F$ is said to be of {\em class
$C^{1,2}$} if $f$ is differentiable in the first variable and twice
Fr\'echet differentiable in the second variable and the functions
$f$, $D_1 f$, $D_2 f$ and $D_2^2 f$ are continuous on $[0,T]\times
E$. Here $D_1f$ and $D_2f$ are the derivatives with respect to the
first and second variable, respectively. We proceed with a version
of It\^o's formula as announced in \cite{NVW1}.

\begin{theorem}[It\^o formula]\label{thm:itoformula}
Let $E$ and $F$ be UMD spaces. Assume that $f:[0,T]\times E\to F$ is
of class $C^{1,2}$. Let $\Phi: [0,T]\times\O\to\calL(H,E)$ be an
$H$-strongly measurable and adapted process which is stochastically
integrable with respect to $W_H$ and assume that the paths of $\Phi$
belong to $L^2(0,T;\g(H,E))$ almost surely. Let $\psi:
[0,T]\times\O\to E$ be strongly measurable and adapted with paths in
$L^1(0,T;E)$ almost surely. Let $\xi:\O\to E$ be strongly
$\mathcal{F}_0$-measurable. Define $\zeta:[0,T]\times\O\to E$ by
$$
\zeta=\xi + \int_0^\cdot \psi(s) \, ds+ \int_0^\cdot \Phi(s) \, dW_H(s).
$$
Then $s\mapsto D_2 f(s, \zeta(s)) \Phi(s)$ is stochastically integrable and
almost surely we have, for all $t\in[0,T]$,
\begin{equation}
\begin{aligned} f(t, \zeta(t))-f(0, \zeta(0)) = & \int_0^t D_1 f(s, \zeta(s)) \, ds + \int_0^t D_2
f(s, \zeta(s)) \psi(s)\, ds
\\ & \qquad + \int_0^t D_2 f(s, \zeta(s)) \Phi(s)\, dW_H(s)
\\ & \qquad + \tfrac{1}{2}\int_0^t \tr_{\Phi(s)} \big(D_2^2 f(s, \zeta(s)) \big) \,ds.
\end{aligned}\label{eq:itoformula}
\end{equation}
\end{theorem}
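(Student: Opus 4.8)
The plan is to prove the It\^o formula in three stages: first for polynomial-type $f$, then for $C^{1,2}$ functions, using a localization and approximation scheme standard in finite dimensions but requiring care with the $\gamma$-radonifying norms in the Banach space setting.

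\emph{Step 1: Reduction to a single smooth step process.} By a stopping time argument I would first reduce to the case where $\xi$ is bounded, $\psi$ and $\Phi$ are bounded in the appropriate norms ($\|\Phi(s)\|_{\g(H,E)}$ and $\|\psi(s)\|_E$ integrable and uniformly bounded), and $\zeta$ stays in a bounded set; on this set $f,D_1f,D_2f,D_2^2f$ are bounded and uniformly continuous. Using Proposition~\ref{prop:main1loc}\eqref{sequence}, approximate $\Phi$ by elementary adapted processes $\Phi_n$ so that $\int_0^\cdot\Phi_n\,dW_H\to\int_0^\cdot\Phi\,dW_H$ in $L^0(\O;C([0,T];E))$ and $\Phi_n h\to\Phi h$ in measure; similarly replace $\psi$ by simple processes. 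It then suffices to prove the formula for a process of the form $\zeta(t)=\xi+\int_0^t\psi(s)\,ds+\int_0^t\Phi(s)\,dW_H(s)$ with $\Phi$ elementary, and pass to the limit at the end. For an elementary $\Phi$ the increments live in a finite-dimensional subspace spanned by finitely many $h_k\otimes x_{kmn}$, so on each subinterval $(t_{n-1},t_n]$ the process $\zeta$ is, pathwise, a finite-dimensional It\^o process driven by the finite vector of scalar Brownian motions $W_H(t)h_k$.

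\emph{Step 2: The finite-dimensional formula and identification of the quadratic term.} On each deterministic subinterval apply the classical vector-valued It\^o formula (valid since the range is finite-dimensional, hence Hilbertian). The quadratic correction term there is $\tfrac12\sum_{k,l}D_2^2f(s,\zeta(s))[x_k,x_l]\,d[M_k,M_l]_s$ where $M_k$ is the $k$-th scalar integrand; one checks by the definition \eqref{trdef} of $\tr_{\Phi(s)}$ in Lemma~\ref{hulptr} that this equals $\tr_{\Phi(s)}(D_2^2f(s,\zeta(s)))\,ds$. Summing over the subintervals gives \eqref{eq:itoformula} for elementary $\Phi$. One also needs here that $s\mapsto D_2f(s,\zeta(s))\Phi(s)$ is stochastically integrable; for elementary $\Phi$ this is immediate, and in the limiting step below it will follow from Proposition~\ref{prop:main1loc}\eqref{gamma} together with the ideal property of $\g(H,E)$ and the boundedness of $D_2f$ on the relevant bounded set, so that $\|D_2f(s,\zeta(s))\Phi(s)\|_{\g(H,F)}\le\|D_2f(s,\zeta(s))\|\,\|\Phi(s)\|_{\g(H,E)}$.

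\emph{Step 3: Passage to the limit.} With the formula in hand for $\Phi_n$, let $n\to\infty$. The terms $f(t,\zeta(t))$, $\int_0^tD_1f(s,\zeta(s))\,ds$ and $\int_0^tD_2f(s,\zeta(s))\psi(s)\,ds$ converge by dominated convergence and uniform continuity of $f,D_1f,D_2f$ on the bounded set. For the stochastic integral term, $D_2f(s,\zeta_n(s))\Phi_n(s)\to D_2f(s,\zeta(s))\Phi(s)$ in $\gamma(H,E)$-norm pathwise by combining the ideal estimate above with the convergence $\zeta_n\to\zeta$ and $\Phi_n\to\Phi$; then Proposition~\ref{prop:main1loc} (the two-sided estimate, applied to differences) upgrades this to convergence of the stochastic integrals in $L^0(\O;C([0,T];E))$, and simultaneously certifies that the limit $D_2f(s,\zeta(s))\Phi(s)$ is stochastically integrable. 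For the trace term, \eqref{trineq} gives $\|\tr_{\Phi_n(s)}(D_2^2f(s,\zeta_n(s)))-\tr_{\Phi(s)}(D_2^2f(s,\zeta(s)))\|\lesssim\|D_2^2f\|_\infty\|\Phi_n(s)-\Phi(s)\|_{\g}(\|\Phi_n(s)\|_\g+\|\Phi(s)\|_\g)+\|\Phi(s)\|_\g^2\,\omega(\|\zeta_n(s)-\zeta(s)\|)$ with $\omega$ a modulus of continuity for $D_2^2f$, so dominated convergence handles it. Finally remove the localization by letting the stopping times increase to $T$.

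\emph{Main obstacle.} The delicate point is Step 3, specifically arranging the approximation so that all convergences hold in the \emph{correct} topologies simultaneously: one must pass from "$\Phi_n h\to\Phi h$ in measure" to "$\Phi_n\to\Phi$ in $\g(H,E)$-norm in $L^0(\O;L^2(0,T;\g(H,E)))$" — which is not automatic and is the reason the hypothesis "paths of $\Phi$ in $L^2(0,T;\g(H,E))$ a.s." is imposed — and then push this through the nonlinear map $\Phi\mapsto D_2f(\cdot,\zeta)\Phi$ while $\zeta$ itself depends on $\Phi$. Keeping the stopping-time localization compatible with the finite-dimensionality exploited in Step 2 (the elementary processes must be chosen before stopping, or the stopping must respect the partition) is the bookkeeping that makes the argument work.
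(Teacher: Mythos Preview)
Your overall architecture (reduce to elementary integrands, invoke the finite-dimensional It\^o formula, pass to the limit) matches the paper's Steps 2 and 3. But you are missing the paper's Step~1, and without it your Step~3 does not go through for a general UMD target space $F$.

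The issue is the convergence of the stochastic integral term. You argue that $D_2f(\cdot,\zeta_n)\Phi_n\to D_2f(\cdot,\zeta)\Phi$ pointwise in $\g(H,F)$-norm (via the ideal property) and then invoke the two-sided estimate in Proposition~\ref{prop:main1loc}. But that estimate is in terms of the norm of $\g(L^2(0,T;H),F)$, not of $L^2(0,T;\g(H,F))$, and for a general UMD space $F$ the embedding $L^2(0,T;\g(H,F))\hookrightarrow \g(L^2(0,T;H),F)$ fails unless $F$ has type~$2$. So neither the stochastic integrability of $D_2f(\cdot,\zeta)\Phi$ nor the convergence of $\int_0^\cdot D_2f(s,\zeta_n(s))\Phi_n(s)\,dW_H(s)$ follows from what you wrote. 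The same objection applies to your claim in Step~2 that stochastic integrability of $D_2f(\cdot,\zeta)\Phi$ ``will follow from Proposition~\ref{prop:main1loc}\eqref{gamma} together with the ideal property'': the pointwise estimate $\|D_2f(s,\zeta(s))\Phi(s)\|_{\g(H,F)}\le\|D_2f(s,\zeta(s))\|\,\|\Phi(s)\|_{\g(H,E)}$ puts the integrand in $L^2(0,T;\g(H,F))$ but not, in general, in $\g(L^2(0,T;H),F)$.

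The paper's fix is to first reduce to the case $F=\R$. For scalar $f$ one has $\g(L^2(0,T;H),\R)=L^2(0,T;H)$, so the limit argument for the stochastic term becomes a genuine $L^2$-convergence statement (the paper's equation \eqref{eq:Kal}, handled via \cite[Proposition~17.6]{Kal}). Once the scalar formula is established, the general $F$-valued formula follows by pairing with $x^*\in F^*$ and invoking the \emph{weak} characterization of stochastic integrability in Proposition~\ref{prop:main1loc}\eqref{weak}: the process
\[
f(\cdot,\zeta)-f(0,\zeta(0))-\int_0^\cdot D_1f-\int_0^\cdot D_2f\,\psi-\tfrac12\int_0^\cdot\tr_{\Phi}(D_2^2f)
\]
is pathwise continuous in $F$, and for each $x^*$ its pairing equals $\int_0^\cdot (D_2f(\cdot,\zeta)\Phi)^*x^*\,dW_H$ by the scalar case; this simultaneously certifies stochastic integrability of $D_2f(\cdot,\zeta)\Phi$ and yields \eqref{eq:itoformula}.

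Two smaller points. First, the stopping-time localization you propose is unnecessary: the paper works pathwise on a full-measure set $\O_0$ where $\zeta_n\to\zeta$ uniformly, and uses compactness of $\{\zeta_n(s),\zeta(s):s\in[0,T],n\ge1\}$ to get boundedness of $f,D_1f,D_2f,D_2^2f$ along the relevant trajectories. Second, you correctly flag the need for an approximating sequence $\Phi_n$ converging in \emph{both} $L^0(\O;L^2(0,T;\g(H,E)))$ and $L^0(\O;\g(L^2(0,T;H),E))$; the paper proves this as a separate lemma (Lemma~\ref{lem:phin}) via dyadic averaging and right-translation, and this is indeed where the hypothesis that $\Phi$ has paths in $L^2(0,T;\g(H,E))$ is used.
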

The first two integrals and the last integral are almost surely defined as a
Bochner integral. To see this for the last integral, notice that by Lemma
\ref{hulptr} we have
\[\begin{aligned}
\int_0^t \big\|\tr_{\Phi(s)} \big(D_2^2 f(s, \zeta(s)) \big) \big\| \, ds
&\leq \int_0^t \|D_2^2 f(s, \zeta(s))\| \|\Phi(s)\|^2_{\g(H,E)} \, ds\\ & \leq
\sup_{s\in [0,T]}\|D_2^2 f(s, \zeta(s))\| \|\Phi\|_{L^2(0,T;\g(H,E))}^2
\end{aligned}\]
almost surely.

\begin{remark}
In the situation of Theorem \ref{thm:itoformula}, Via Proposition
\ref{prop:main1loc}, the stochastic integrability implies that $\Phi\in
L^0(\O;\ggH)$. If, in addition to the assumptions of Theorem
\ref{thm:itoformula}, we assume that $E$ has type $2$, then
\[L^2(0,T;\g(H,E))\hookrightarrow \ggH\]
canonically. Therefore, the assumption that $\Phi$ is stochastically
integrable is automatically fulfilled since $\Phi\in
L^0(\O;L^2(0,T;\g(H,E)))$. In that case the theorem reduces to the It\^o
formula in \cite{Brz3,Nh}.

If $E$ has cotype $2$, then
$$\ggH\hookrightarrow L^2(0,T;\g(H,E))$$
canonically and the assumption that $\Phi\in L^0(\O;L^2(0,T;\g(H,E)))$ is
automatically fulfilled if $\Phi$ is stochastically integrable.
\end{remark}

As a consequence of Theorem \ref{thm:itoformula} we obtain the following
corollary.
\begin{corollary}\label{ex:dual}
Let $E_1$, $E_2$ and $F$ be UMD Banach spaces and let $f:E_1\times E_2\to F$ be
a bilinear map. Let $(h_n)_{n\geq 1}$ be an orthonormal basis of $H$.
For $i=1,2$ let $\Phi_i: [0,T]\times\O\to\calL(H,E_i)$,
$\psi_i: [0,T]\times\O\to E$ and $\xi_i:\O\to E_i$
satisfy the assumptions of Theorem \ref{thm:itoformula} and define
$$
\zeta_i(t)=\xi_i + \int_0^t \psi_i(s) \, ds+ \int_0^t \Phi_i(s) \,
dW_H(s).
$$
Then, almost surely for all $t\in [0,T]$,
\[\begin{aligned}
f(\zeta_1(t), \zeta_2(t)) - f(\zeta_1(0), \zeta_2(0)) = & \int_0^t
f(\zeta_1(s), \psi_2(s)) + f(\psi_1(s), \zeta_2(s)) \, ds
\\ & \quad + \int_0^t f(\zeta_1(s), \Phi_2(s)) + f(\Phi_1(s), \zeta_2(s))\, dW_H(s)
\\ & \quad + \int_0^t \sum_{n\geq 1} f(\Phi_1(s) h_n, \Phi_2(s) h_n) \,ds.
\end{aligned}\]
\end{corollary}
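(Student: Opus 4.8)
The plan is to deduce Corollary~\ref{ex:dual} from Theorem~\ref{thm:itoformula} by applying the It\^o formula to a suitable function on a product space. First I would set $E := E_1\times E_2$, equipped with, say, the norm $\|(x_1,x_2)\| := \|x_1\|_{E_1}+\|x_2\|_{E_2}$; since $E_1$ and $E_2$ are UMD, so is $E$ (the UMD property passes to finite direct sums). I would combine the given data into a single $E$-valued process by setting $\Phi := (\Phi_1,\Phi_2)\colon [0,T]\times\O\to\calL(H,E)$, $\psi := (\psi_1,\psi_2)$, and $\xi := (\xi_1,\xi_2)$; these are $H$-strongly measurable, adapted, and satisfy the integrability hypotheses of Theorem~\ref{thm:itoformula} because each coordinate does (and stochastic integrability of the pair follows coordinatewise from Proposition~\ref{prop:main1loc}, the stochastic integral of $\Phi$ being the pair of the two coordinate integrals). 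Then $\zeta := (\zeta_1,\zeta_2)$ is exactly $\xi + \int_0^\cdot\psi\,ds + \int_0^\cdot\Phi\,dW_H$.

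Next I would define $g\colon E = E_1\times E_2\to F$ by $g(x_1,x_2) := f(x_1,x_2)$, which is the time-independent function we feed into Theorem~\ref{thm:itoformula}. Since $f$ is bilinear and bounded (bilinear maps between Banach spaces are automatically continuous here because we are in the finite-product setting — or one simply assumes boundedness), $g$ is of class $C^{1,2}$: indeed $D_1 g = 0$, the Fr\'echet derivative is $Dg(x_1,x_2)(u_1,u_2) = f(u_1,x_2) + f(x_1,u_2)$, the second derivative is the constant bilinear map $D^2 g(x_1,x_2)\big((u_1,u_2),(v_1,v_2)\big) = f(u_1,v_2) + f(v_1,u_2)$, and all of these are continuous (the second one being constant). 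Applying Theorem~\ref{thm:itoformula} with $f \rightsquigarrow g$, $D_1 g\equiv 0$, I obtain almost surely for all $t$:
\[
g(\zeta(t)) - g(\zeta(0)) = \int_0^t Dg(\zeta(s))\psi(s)\,ds + \int_0^t Dg(\zeta(s))\Phi(s)\,dW_H(s) + \tfrac12\int_0^t \tr_{\Phi(s)}\big(D^2 g(\zeta(s))\big)\,ds.
\]

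Finally I would unravel each term using the explicit formulas for $Dg$ and $D^2 g$. The drift term gives $Dg(\zeta(s))\psi(s) = f(\psi_1(s),\zeta_2(s)) + f(\zeta_1(s),\psi_2(s))$; the stochastic term gives $Dg(\zeta(s))\Phi(s)h = f(\Phi_1(s)h,\zeta_2(s)) + f(\zeta_1(s),\Phi_2(s)h)$, i.e.\ the operator $f(\Phi_1(\cdot),\zeta_2(\cdot)) + f(\zeta_1(\cdot),\Phi_2(\cdot))\in\calL(H,F)$, matching the claimed integrand. For the trace term, by the definition \eqref{trdef} applied with $R = S = \Phi(s)\in\g(H,E)$ and $T = D^2 g(\zeta(s))\in\calL(E,\calL(E,F))$,
\[
\tr_{\Phi(s)}\big(D^2 g(\zeta(s))\big) = \sum_{n\geq 1} \big(D^2 g(\zeta(s))\,\Phi(s)h_n\big)(\Phi(s)h_n) = \sum_{n\geq 1}\Big( f(\Phi_1(s)h_n,\Phi_2(s)h_n) + f(\Phi_2(s)h_n,\Phi_1(s)h_n)\Big),
\]
where I used $\Phi(s)h_n = (\Phi_1(s)h_n,\Phi_2(s)h_n)$ and the formula for $D^2 g$. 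Hmm — this produces a symmetrized sum $f(\Phi_1 h_n,\Phi_2 h_n) + f(\Phi_2 h_n,\Phi_1 h_n)$, so after the factor $\tfrac12$ the stated term $\sum_n f(\Phi_1(s)h_n,\Phi_2(s)h_n)$ is recovered only if one is careful about how $f$ is extended to act on the product; the cleanest fix is to observe that $D^2 g$ as a symmetric bilinear map contributes the symmetrization, and the $\tfrac12$ exactly cancels it, or alternatively to work with the (non-symmetric) second-order Taylor coefficient directly. The main obstacle is thus purely bookkeeping: reconciling the symmetric convention for $D_2^2 f$ used in Theorem~\ref{thm:itoformula} with the asymmetric bilinear form $f$, together with checking that $\sum_n \|f(\Phi_1(s)h_n,\Phi_2(s)h_n)\|\le \|f\|\,\|\Phi_1(s)\|_{\g(H,E_1)}\|\Phi_2(s)\|_{\g(H,E_2)}$ via Lemma~\ref{hulptr} so that the last integral is a well-defined Bochner integral and independent of the orthonormal basis. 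Everything else — adaptedness, measurability, and stochastic integrability of the product process and of $s\mapsto Dg(\zeta(s))\Phi(s)$ — transfers immediately from Theorem~\ref{thm:itoformula} applied on $E_1\times E_2$.
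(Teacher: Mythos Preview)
Your approach---applying Theorem~\ref{thm:itoformula} on the product space $E=E_1\times E_2$ to the function $g(x_1,x_2)=f(x_1,x_2)$---is exactly the intended one; the paper states the corollary without proof, simply as ``a consequence of Theorem~\ref{thm:itoformula}''.

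There is, however, a small slip in your trace computation that creates the ``obstacle'' you worry about, and it is worth clearing up. With $u=v=(\Phi_1(s)h_n,\Phi_2(s)h_n)$ and your (correct) formula $D^2g\big((u_1,u_2),(v_1,v_2)\big)=f(u_1,v_2)+f(v_1,u_2)$, substitution gives
\[
f(u_1,v_2)+f(v_1,u_2)=f(\Phi_1(s)h_n,\Phi_2(s)h_n)+f(\Phi_1(s)h_n,\Phi_2(s)h_n)=2\,f(\Phi_1(s)h_n,\Phi_2(s)h_n),
\]
not $f(\Phi_1h_n,\Phi_2h_n)+f(\Phi_2h_n,\Phi_1h_n)$ as you wrote (the latter does not even type-check, since $f$ takes its first argument in $E_1$). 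So the factor $\tfrac12$ cancels the $2$ exactly, with no symmetrization issue at all; the last term of the corollary drops out immediately.

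One further minor point: a bilinear map between Banach spaces is \emph{not} automatically continuous, so boundedness of $f$ is an implicit standing assumption (needed both for $g$ to be $C^{1,2}$ and for the trace estimate via Lemma~\ref{hulptr}). With that understood, your argument is complete.
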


In particular, for a UMD space $E$, taking $E_1 = E$, $E_2=E^*$, $F=\R$ and
$f(x, x^*) = \lb x, x^*\rb$, it follows that almost surely for all
$t\in[0,T]$,
\begin{equation}\label{itodual}
\begin{aligned}
\lb \zeta_1(t), \zeta_2(t)\rb - \lb \zeta_1(0), \zeta_2(0)\rb = & \int_0^t \lb
\zeta_1(s), \psi_2(s)\rb + \lb \psi_1(s), \zeta_2(s)\rb \, ds
\\ & \qquad + \int_0^t \lb \zeta_1(s), \Phi_2(s)\rb + \lb \Phi_1(s), \zeta_2(s)\rb\, dW_H(s)
\\ & \qquad + \int_0^t \sum_{n\geq 1} \lb \Phi_1(s) h_n, \Phi_2(s) h_n\rb \,ds.
\end{aligned}
\end{equation}

The result of Corollary \ref{ex:dual} for martingale type $2$ spaces $E_1$,
$E_2$ and $F$ can be found in \cite[Corollary 2.1]{Brz3}. However, we want to
emphasize that it is not possible to obtain \eqref{itodual} with martingale
type $2$ methods, since $E$ and $E^*$ have martingale type $2$ if and only if
$E$ is isomorphic to a Hilbert space.

For the proof of theorem \ref{thm:itoformula} we need two lemmas.

\begin{lemma}\label{lem:phin}
Let $E$ be a UMD space. Let $\Phi:[0,T]\times\O\to\calL(H,E)$ be an
$H$-strongly measurable and adapted process which is stochastically integrable
with respect to $W_H$ and assume that its paths belong to $L^2(0,T;\g(H,E))$
almost surely. Then there exists a sequence of elementary adapted processes
$(\Phi_n)_{n\geq 1}$ such that
\[\Phi=\limn \Phi_n \ \text{in} \ L^0(\O;L^2(0,T;\g(H,E)))\cap
L^0(\O;\ggH).\]
\end{lemma}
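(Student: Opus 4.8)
The plan is to build the approximating sequence $(\Phi_n)$ in two stages, first reducing to a process with better integrability and continuity properties, then approximating that by elementary adapted processes. The key point is that we must achieve convergence \emph{simultaneously} in the two topologies $L^0(\O;\LH)$ \emph{and} $L^0(\O;\LppH)$ — abbreviating $\ggH=\g(\LH,E)$ — where the first controls the pathwise $\g(H,E)$-norm and the second (via Proposition \ref{prop:main1loc}) controls the stochastic integral. Since $L^0$ of a separable metric space is itself metrizable by an $F$-norm, it suffices to produce, for each fixed $\e>0$, an elementary adapted $\Psi$ with $\n\Phi-\Psi\n$ small in each of the two metrics; a diagonal argument then yields the sequence.

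First I would localize and truncate. Using stopping times $\tau_k = \inf\{t: \n\Phi\n_{L^2(0,t;\g(H,E))} \ge k\}\wedge T$ and the fact that $\Phi\one_{[0,\tau_k]}\to\Phi$ in both topologies as $k\to\infty$ (for $\LppH$ one uses that stochastic integrability is local, together with the ideal property $\n\Phi\one_{[0,\tau_k]}\n_{\ggH}\le\n\Phi\n_{\ggH}$ and dominated convergence in probability), I reduce to the case where $\n\Phi\n_{L^2(0,T;\g(H,E))}$ is bounded by a constant and $\Phi$ is stochastically integrable. Next, to gain adaptedness-friendly regularity in time, I would replace $\Phi$ by a ``moving-average'' or dyadic martingale-type approximation: set $\Phi^{(m)}(t) := m\int_{(t-1/m)\vee 0}^{t}\Phi(s)\,ds$ (a Bochner integral in $\g(H,E)$, well-defined a.s.\ by the $L^2$-path assumption and adapted since we integrate over the past). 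As $m\to\infty$, $\Phi^{(m)}\to\Phi$ in $L^2(0,T;\g(H,E))$ a.s.\ by Lebesgue differentiation, hence in $L^0(\O;L^2(0,T;\g(H,E)))$; and since $\Phi^{(m)}$ is obtained from $\Phi$ by an averaging operator that is bounded on $\LH$ uniformly in $m$ and converges strongly there, an application of the ideal/contraction property of $\g$-radonifying operators together with \cite[Proposition 17.6]{Kal} gives $\Phi^{(m)}\to\Phi$ in $L^0(\O;\ggH)$ as well. So we may now assume $\Phi$ has a.s.\ continuous paths in $\g(H,E)$.

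For a process with a.s.\ continuous $\g(H,E)$-valued paths and bounded $L^2$-norm, the standard step functions $\Phi^N(t):=\Phi(t_{j-1}^N)$ for $t\in(t_{j-1}^N,t_j^N]$ along a mesh of size $T/N$ are elementary (after a further finite-rank truncation in $H$ and in $E$, using that $\g(H,E)$-valued random variables have essentially separable range and can be approximated in $\g(H,E)$ by finite sums $\sum h_k\otimes x_{kmn}$, and that $\Phi(t_{j-1}^N)$ is $\F_{t_{j-1}^N}$-measurable so the level sets $A_{mn}$ lie in the right $\sigma$-algebra). These converge to $\Phi$ uniformly on $[0,T]$ in $\g(H,E)$ by path continuity, hence in $L^0(\O;L^2(0,T;\g(H,E)))$, and — because the piecewise-constant-in-time embedding $L^2(0,T;\g(H,E))\to\ggH$ is governed again by a contraction on $\LH$ plus strong convergence — also in $L^0(\O;\ggH)$. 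Combining the three reductions and diagonalizing over $k$, $m$, $N$ produces the desired $(\Phi_n)$.

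The main obstacle is the \emph{joint} convergence in $\ggH$: unlike the $L^2(0,T;\g(H,E))$-statement, which is purely pathwise and follows from classical real-analysis approximation, convergence of $\g$-radonifying norms of $\LH$-operators does not follow from pointwise convergence of the defining kernels, and one cannot simply invoke dominated convergence on the Gaussian sums. The trick throughout is that every approximation operation we use (time-averaging, piecewise-constant projection, localization) is induced by an operator on $\LH$ that is a contraction — or at least uniformly bounded — and converges strongly to the identity; one then appeals to the right-ideal property of $\g(\LH,E)$ under such operators and to the $L^0$-completeness/continuity of the stochastic integral via Proposition \ref{prop:main1loc} and \cite[Proposition 17.6]{Kal} to upgrade strong convergence to $L^0(\O;\ggH)$-convergence. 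Making the finite-rank truncation in $H$ and $E$ compatible with adaptedness while keeping the $\ggH$-error small is the one place where some care with the $\g$-norm estimates is genuinely needed; everything else is routine bookkeeping.
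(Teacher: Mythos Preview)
Your overall strategy---reduce to operations that are induced by uniformly bounded operators on $L^2(0,T;H)$ converging strongly to the identity, then invoke the right-ideal property of the $\gamma$-norm---is exactly the right idea and is what the paper does. But your step-function stage has a real gap.

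After the moving average you take $\Phi^{N}(t)=\Phi^{(m)}(t_{j-1}^{N})$ on $(t_{j-1}^{N},t_{j}^{N}]$ and claim convergence in $\gamma(L^2(0,T;H),E)$ because ``the piecewise-constant-in-time embedding \dots\ is governed again by a contraction on $L^2(0,T;H)$ plus strong convergence''. This is not true: the map $\Phi^{(m)}\mapsto\Phi^{N}$ uses \emph{point evaluations} of $\Phi^{(m)}$ and is not of the form $X\mapsto X\circ S_N$ for any $S_N\in\calL(L^2(0,T;H))$, so the ideal-property mechanism you correctly invoke elsewhere simply does not apply here. Nor does uniform convergence in $\g(H,E)$ rescue you: it gives convergence in $L^2(0,T;\g(H,E))$, but that implies convergence in $\gamma(L^2(0,T;H),E)$ only when $E$ has type $2$, which is not assumed. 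For a general UMD space the two norms are incomparable, and one can build step-function perturbations with arbitrarily small $L^\infty(0,T;\g(H,E))$-norm but $\gamma$-norm bounded away from zero.

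The paper's proof sidesteps this by replacing your two middle steps with a single one: it sets
\[
\Psi_n(t)h=\E\big(R^{\delta_n}(\Phi(\cdot)P_n h)\,\big|\,\mathcal G_n\big)(t),
\]
i.e.\ a right-shift by $\delta_n=2^{-n}$ followed by conditional expectation onto the $n$-th dyadic $\sigma$-algebra, together with a finite-rank projection $P_n$ in $H$. This \emph{is} composition with a uniformly bounded operator on $L^2(0,T;H)$ (self-adjoint projection composed with a translation), so the ideal property yields $\Psi_n\to\Phi$ in $L^0(\O;\gamma(L^2(0,T;H),E))$ and in $L^0(\O;L^2(0,T;\g(H,E)))$ simultaneously, by \cite[Proposition~2.1]{NVW1}; the right-shift is what makes the resulting step function adapted. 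The final passage from these $\Psi_n$ to genuinely elementary processes by approximating each $\F_{t_{kn}}$-measurable coefficient by simple functions is as you describe. If you replace your point-evaluation step by this shifted conditional expectation, your proof goes through---and in fact the separate moving-average and stopping-time localization become unnecessary.
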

\begin{proof}
Let $(h_n)_{n\geq 1}$ be an orthonormal basis for $H$ and denote by $P_n$ the
projection onto the span of $\{h_1, \ldots, h_n\}$ in $H$. Define
$\Psi_n:[0,T]\times \O\to \g(H,E)$ as
$$\begin{aligned}
\Psi_n(t, \omega) h :&=\E(R^{\delta_n}(\Phi(\cdot, \omega) P_n
h)|\mathcal{G}_n)(t) \\ &= \sum_{k=1}^{2^n}\one_{((k-1)2^{-n}T,
k2^{-n}T]}(t)\int_{(k-2) 2^{-n}T}^{(k-1)2^{-n} T} \Phi(s) P_n h \,
ds ,
\end{aligned}
$$ where $R^{\delta_n}:L^2(0,T;E)\to L^2(0,T;E)$ denotes right
translation over $\delta_n=2^{-n}$, $\mathcal{G}_n$ is the $n$-th
dyadic $\sigma$-algebra.
By \cite[Proposition
2.1]{NVW1}, $\Phi = \limn \Psi_n$ in $L^0(\O;L^2(0,T;\g(H,E)))$ and
$\Phi = \limn {\Psi_n}$ in $L^0(\O;\gamma(L^2(0,T;H), E))$.

The processes $\Psi_n$ are not elementary in general, but of the form
$$\Psi_n=\sum_{k=1}^{K_n} 1_{(t_{kn}, t_{k+1,n}]}\otimes
\sum_{i=1}^n h_i \otimes\xi_{ikn},$$ where
each $\xi_{ikn}$ is an $\mathcal{F}_{t_{kn}}$-measurable $E$-valued
random variable. Approximating each $\xi_{ikn}$ in probability by a
sequence of $\mathcal{F}_{t_{kn}}$-simple random variables we
obtain a sequence of elementary adapted processes
$(\Psi_{nm})_{m\geq 1}$ such that $\lim_{m\to\infty}\Psi_{nm}=
\Psi_n$ in $L^0(\O;L^2(0,T;\g(H,E)))$ and
$\lim_{m\to\infty}\Psi_{nm}\!=\!{\Psi_n}$ in
$L^0(\O;\gamma(L^2(0,T;H), E))$. For an appropriate subsequence
$(m_n)_{n\geq 1}$, the elementary adapted processes $\Phi_{n m_n}$
have the required properties.
\end{proof}

The next lemma is proved in a similar way.
\begin{lemma}\label{lem:psin}
Let $E$ be a Banach space, and let $\psi\in L^0(\O;L^1(0,T;E))$ be an adapted
process. Then there exists a sequence of elementary adapted processes
$(\psi_n)_{n\geq 1}$ such that $\psi = \limn \psi_n$ in $L^0(\O;L^1(0,T;E))$.
\end{lemma}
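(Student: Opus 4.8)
The plan is to mirror the proof of Lemma \ref{lem:phin}, working directly in $L^1(0,T;E)$ so that the finite-rank projections $P_n$ on $H$ play no role. Writing $\d_n=2^{-n}T$ and letting $\G_n$ be the $\sigma$-algebra on $(0,T]$ generated by the dyadic intervals $((k-1)\d_n,k\d_n]$, $k=1,\dots,2^n$, I would define the shifted dyadic conditional expectations
\[
\psi_n(t,\o):=\E\big(R^{\d_n}\psi(\cdot,\o)\,\big|\,\G_n\big)(t)
=\sum_{k=2}^{2^n}\one_{((k-1)\d_n,\,k\d_n]}(t)\,\frac{1}{\d_n}\int_{(k-2)\d_n}^{(k-1)\d_n}\psi(s,\o)\,ds,
\]
where $R^{\d_n}$ is right translation over $\d_n$ on $L^1(0,T;E)$. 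The shift ensures that the value of $\psi_n$ on $((k-1)\d_n,k\d_n]$ depends only on $\psi|_{((k-2)\d_n,(k-1)\d_n]}$, hence is $\F_{(k-1)\d_n}$-measurable; so each $\psi_n$ is adapted and has the form $\psi_n=\sum_k\one_{(t_{kn},t_{k+1,n}]}\otimes\xi_{kn}$ with $\xi_{kn}$ an $\F_{t_{kn}}$-measurable $E$-valued random variable (not simple in general).

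Next I would check that $\psi=\limn\psi_n$ in $L^0(\O;L^1(0,T;E))$, and for this it suffices to prove pathwise convergence $\psi_n(\cdot,\o)\to\psi(\cdot,\o)$ in $L^1(0,T;E)$ for almost every $\o$, since this forces convergence in measure. Fixing such an $\o$ and abbreviating $g=\psi(\cdot,\o)$, I would use that $\E(\cdot\,|\,\G_n)$ is a contraction on $L^1(0,T;E)$ to bound
\[
\n\psi_n(\cdot,\o)-g\n_{L^1(0,T;E)}\leq\n R^{\d_n}g-g\n_{L^1(0,T;E)}+\n\E(g\,|\,\G_n)-g\n_{L^1(0,T;E)};
\]
the first term tends to $0$ by strong continuity of translations on $L^1(0,T;E)$, and the second by the vector-valued martingale convergence theorem for the dyadic filtration (valid for an arbitrary Banach space $E$, since simple functions are dense in $L^1(0,T;E)$ and the conditional expectations are contractions). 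Alternatively one can simply invoke \cite[Proposition 2.1]{NVW1} in the $L^1$-setting.

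Finally I would pass from the $\psi_n$ to genuinely elementary adapted processes: for fixed $n$, approximate each $\xi_{kn}$ in probability by $\F_{t_{kn}}$-simple $E$-valued random variables $\xi_{kn}^{(m)}$ and set $\psi_{nm}:=\sum_k\one_{(t_{kn},t_{k+1,n}]}\otimes\xi_{kn}^{(m)}$, so that $\psi_{nm}\to\psi_n$ in $L^0(\O;L^1(0,T;E))$ as $m\to\infty$. A diagonal choice of a subsequence $(m_n)_{n\geq1}$ then produces elementary adapted processes $\psi_{nm_n}$ converging to $\psi$ in $L^0(\O;L^1(0,T;E))$, which is the required sequence. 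There is no serious obstacle here; the only point requiring an actual argument is the pathwise $L^1$-convergence $\psi_n(\cdot,\o)\to\psi(\cdot,\o)$, and the remainder is routine manipulation of the metrizable topology of convergence in probability, exactly as in Lemma \ref{lem:phin}.
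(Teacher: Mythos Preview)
Your proposal is correct and follows exactly the route the paper intends: the paper does not write out a separate proof but simply states that the lemma ``is proved in a similar way'' to Lemma~\ref{lem:phin}, i.e.\ via shifted dyadic averages followed by approximation of the resulting $\F_{t_{kn}}$-measurable coefficients by simple random variables and a diagonal extraction. Your explicit justification of the pathwise $L^1$-convergence (translation continuity plus vector-valued dyadic martingale convergence) fills in the details appropriately.
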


\begin{proof}[Proof of Theorem \ref{thm:itoformula}]
The proof is divided into several steps.

{\em Step 1} -- Reduction to the case $F=\R$. Assume the theorem holds in the
case $F=\R$. Applying this to $\lb f, x^*\rb$ for $x^*\in E^*$ arbitrary we
obtain
\[\begin{aligned} \lb f(t, \zeta(t)), x^*\rb-\lb f(0, \zeta(0)), x^*\rb
&=  \Big< \int_0^t D_1 f(s, \zeta(s)) \, ds, x^*\Big>
\\ & \qquad + \Big< \int_0^t D_2
f(s, \zeta(s)) \psi(s)\, ds, x^*\Big>
\\ & \qquad + \int_0^t \big(D_2 f(s, \zeta(s)) \Phi(s)\big)^{*} x^*\, dW_H(s)
\\ & \qquad + \frac{1}{2}\Big<\int_0^t \tr \big(D_2^2 f(s, \zeta(s))
(\Phi(s),\Phi(s))\big)
\,ds,x^*\Big>.
\end{aligned}\]
An application of Proposition \ref{prop:main1loc} (\ref{weak}) to the
pathwise continuous process
\[\begin{aligned}
\ & f(\cdot, \zeta)-  \lb f(0, \zeta(0)) - \int_0^\cdot D_1 f(s,
\zeta(s)) \, ds - \int_0^\cdot D_2 f(s, \zeta(s)) \psi(s)\, ds \\ &
\qquad \qquad - \frac{1}{2}\int_0^\cdot \tr \big(D_2^2 f(s, \zeta(s))
(\Phi(s),\Phi(s))\big) \,ds.
\end{aligned}\]
shows that $D_2 f(\cdot, \zeta) \Phi$ is stochastically integrable
and \eqref{eq:itoformula} holds. It follows that it suffices to
consider $F=\R$.

{\em Step 2} -- Reduction to elementary adapted processes. Assume the theorem
holds for elementary processes. By path continuity it suffices to show that
for all $t\in [0,T]$ almost surely \eqref{eq:itoformula} holds. Define the
sequence $(\zeta_n)_{n\geq 1}$ in $L^0(\O;C([0,T];E))$ by
$$
\zeta_n(t)=\xi_n + \int_0^t \psi_n(s) \, ds+ \int_0^t \Phi_n(s) \, dW_H(s),
$$
where $(\xi_n)_{n\geq 1}$ is a sequence of
$\mathcal{F}_0$-measurable simple functions with $\xi = \limn \xi_n$
almost surely and $(\Phi_n)_{n\geq 1}$ and $(\psi_n)_{n\geq 1}$ are
chosen from Lemma \ref{lem:phin} and \ref{lem:psin}. By
\cite[Theorems 5.5 and 5.9]{NVW1} we have $\zeta=\limn \zeta_n$ in
$L^0(\O;C([0,T];E))$. We may choose $\O_0\subseteq \O$ of full
measure and a subsequence which we again denote by $(\zeta_n)_{n\geq
1}$ such that
\begin{equation}\label{zetan}
\zeta=\limn \zeta_n \ \text{in} \ C([0,T];E) \ \text{ on $
\O_0$}.
\end{equation}
Thus, in order to prove \eqref{eq:itoformula} holds for the triple $(\xi, \psi, \Phi)$ it
suffices to show that all terms in \eqref{eq:itoformula} depend continuously
on $(\xi, \psi, \Phi)$. This is standard, but we include the details for
convenience.

For the left hand side of \eqref{eq:itoformula} it follows from \eqref{zetan}
that
$$\limn f(t, \zeta_n(t))-f(0, \zeta_n(0)) =  f(t, \zeta(t))-f(0, \zeta(0)) \ \ \text{almost
surely.}$$

For a continuous function $p:[0,T]\times E\to B$, where $B$ is some Banach
space, and $\omega\in \Omega_0$ fixed the set
\[\{p(s, \zeta_n(s,\omega)): \ s\in[0,T], \ n\geq 1\}\cup \{p(s, \zeta(s,\omega)):\ s\in[0,T]\}\]
is compact in $B$, hence bounded. Let $K = K(\omega)$ denote the maximum of
these bounds obtained by applying this to the functions $f$, $D_2f$ and
$D_2^2f$. By Lemma \ref{lem:psin}, \eqref{zetan} and dominated convergence, on
$\Omega_0$ we obtain
\[\limn\int_0^t D_1 f(s, \zeta_n(s)) \, ds
  = \int_0^t D_1 f(s, \zeta(s)) \, ds,\]
\[\limn\int_0^t D_2 f(s, \zeta_n(s)) \psi_n(s) \, ds
= \int_0^t D_2 f(s, \zeta(s)) \psi(s) \,ds.\]

For the stochastic integral term in \eqref{eq:itoformula}, by \cite[Lemma
17.12]{Kal} it is enough to show that on $\Omega_0$,
\begin{equation}\label{eq:Kal}
\limn \|D_2 f(\cdot, \zeta) \Phi -  D_2 f(\cdot,\zeta_n) \Phi_n
\|_{L^2(0,T;H)}= 0.
\end{equation}
Here $D_2 f(\cdot, \zeta)$ and $D_2 f(\cdot, \zeta_n)$ stand for $D_2 f(\cdot,
\zeta(\cdot))$ and $D_2 f(\cdot, \zeta_n(\cdot))$, respectively. But, by Lemma
\ref{lem:phin} we have
\[\begin{aligned}\lim_{n\to\infty} \|D_2 f(\cdot, \zeta_n)(\Phi-\Phi_n) \|_{L^2(0,T;H)} & \le
K \limn \|\Phi-\Phi_n\|_{L^2(0,T;\calL(H,E))}\\ & \leq K \limn
\|\Phi-\Phi_n\|_{L^2(0,T;\g(H,E))}= 0,
\end{aligned}\] and, by \eqref{zetan} and dominated
convergence,
$$\limn \|(D_2 f(\cdot, \zeta)- D_2 f(\cdot, \zeta_n)) \Phi\|_{L^2(0,T;H)}= 0$$
on $\Omega_0$. Together these estimates give \eqref{eq:Kal}.

For the last term in \eqref{eq:itoformula} we have
\[
\begin{aligned}
\ &  \|\tr_{\Phi} (D_2^2 f(\cdot, \zeta)) - \tr_{\Phi_n} (D_2^2 f(\cdot,
\zeta_n) )\|_{L^1(0,T)}
\\ & \qquad\qquad \leq \|\tr_{\Phi}(D_2^2 f(\cdot, \zeta))- \tr_{\Phi}(D_2^2 f(\cdot, \zeta_n))\|_{L^1(0,T)}
\\ & \qquad\qquad\qquad + \|\tr_{\Phi}(D_2^2 f(\cdot, \zeta_n)) - \tr_{\Phi_n} (D_2^2 f(\cdot,
\zeta_n))\|_{L^1(0,T)}.
\end{aligned}
\]
The first term tends to $0$ on $\Omega_0$ by Lemma \ref{hulptr}, \eqref{zetan}
and dominated convergence. For the second term, by Lemma \ref{hulptr}, the
Cauchy-Schwartz inequality and Lemma \ref{lem:phin} we have
\[
\begin{aligned}
\ & \|\tr_{\Phi} (D_2^2 f(\cdot, \zeta_n)) - \tr_{\Phi_n}(D_2^2 f(\cdot,
\zeta_n)) \|_{L^1(0,T)}
 \\ & \qquad \qquad \leq  \|\tr_{\Phi} (D_2^2 f(\cdot, \zeta_n))
- \tr_{\Phi, \Phi_n}(D_2^2 f(\cdot, \zeta_n))\|_{L^1(0,T)}
 \\ & \qquad \qquad \qquad + \|\tr_{\Phi, \Phi_n}((D_2^2
f(\cdot, \zeta_n)) - \tr_{\Phi_n}(D_2^2 f(\cdot, \zeta_n))\|_{L^1(0,T)}
\\ & \qquad \qquad \leq
K\|\Phi\|_{L^2(0,T;\g(H,E))}\|\Phi-\Phi_n\|_{L^2(0,T;\g(H,E))}
\\ & \qquad\qquad\qquad +K\|\Phi_n\|_{L^2(0,T;\g(H,E))}\|\Phi-\Phi_n\|_{L^2(0,T;\g(H,E))},
\end{aligned}
\]
which tends to $0$ on $\Omega_0$ as well.

{\em Step 3 --} If $\xi$ is simple, $\psi$ and $\Phi$ are
elementary, they take their values in a finite dimensional subspace
$E_0\subseteq E$ and there exists a finite dimensional subspace
$H_0$ of $H$ such that $H = H_0 \oplus \text{Ker}(\Phi)$. Since
$E_0$ is isomorphic to some $\R^n$ and $H_0$ is isomorphic to some
$\R^m$, \eqref{eq:itoformula} follows from the corresponding real
valued It\^o formula.
\end{proof}

\begin{remark}
With more elaborate methods one may show that in Corollary \ref{ex:dual} the
assumption $\Phi\in L^0(\O;L^2(0,T;\g(H,E_i)))$ is not needed. The proof of
this result depends heavily on the fact that $D^2 f$ is constant in that case.
For general functions $f$ of class $C^{1,2}$ we do not know if the assumption
can be omitted.
\end{remark}

\section{The abstract stochastic equation\label{appl}}

After these preparations we start our study of the  problem
\begin{equation}\label{LinSCP}
\begin{aligned}
d U(t) & = A(t) U(t) dt + \sum_{n=1}^N B_n U(t) d W_n(t), \ \ t\in[0,T],\\ U(0) & = u_0.
\end{aligned}
\end{equation}
The processes $W_n= (W_n(t))_{t\in [0,T]}$ are independent standard
Brownian motions on some probability space $(\O,\F,\P)$ and are adapted to some
filtration ${\mathbb F} = (\F_t)_{t\in [0,T]}$. The initial random variable
$u_0:\O\to E$ is assumed to be strongly $\F_0$-measurable.
Concerning the operators $A(t):\D(A(t))\subseteq E \to E$ and
$B_n:\D(B_n)\subseteq E \to E$ we assume the
following hypotheses.

\medskip
\begin{enumerate}
\item[(H1)]\label{H10} The operators
$A(t)$ are closed and densely defined;
\item[(H2)]\label{H11}
The operators $B_n$ generate commuting $C_0$-groups
$G_n = (G_n(t))_{t\in \R}$ on $E$;
\item[(H3)]\label{H12}
For all $t\in [0,T]$ we have $\D(A(t))\subseteq \bigcap_{n=1}^N \D(B_n^2)$.
\end{enumerate}
Defining $\D(C(t)):=\D(A(t))$ and
$C(t):= A(t)- \frac12\sum_{n=1}^N B_n^2,$
we further assume
\begin{enumerate}
\item[(H4)] There exists a $\lambda\in \R$ with $\lambda\in \varrho(A(t))\cap \varrho(C(t))$ for all $t\in [0,T]$,
such that the functions $t\mapsto B_n^2 R(\lambda,A(t))$ and $t\mapsto B_n^2
R(\lambda,C(t))$ are strongly continuous on $[0,T]$.
\end{enumerate}
Hypothesis (H4) is automatically fulfilled in the case $A(t)$ is independent
of $t$. Below it is showed that it is fulfilled in several time dependent
situation as well.

An $E$-valued process $U=\{U(t)\}_{t\in [0,T]}$ is called a {\em strong
solution of \eqref{LinSCP} on the interval $(0,T]$} if $U\in C([0,T];E)$
almost surely, $U(0)=u_0$, and for all $\e>0$ the following conditions are
satisfied:

\begin{enumerate}
\item For almost all $\o\in\O$,
$U(t,\o)\in \D(A(t))$  for almost all $t\in [\e,T]$ and the path $t\mapsto
A(t) U(t,\o)$ belongs to $L^1(\e,T;E)$;
\item For $n=1,\dots,N$ the process $B_n U$ is
stochastically integrable with respect to $W_n$ on $[\e,T]$;
\item Almost surely,
$$
U(t) = U(\e) + \int^t_\e  A(s) U(s) \,ds + \sum_{n=1}^N \int_\e^t B_n U(s)
\,dW_n(s) \ \ \text{for all $t\in [\e,T]$.}
$$
\end{enumerate}
Note that by path continuity, the exceptional sets may be chosen independently
of $\e\in (0,T]$. We call $U$ a {\em strong solution on the interval $[0,T]$}
if $U$ satisfies (1), (2) and (3) with $\e=0$.

Assuming Hypotheses (H1)--(H4), in the Hilbert space setting the existence of
strong solutions has been established in \cite{DPIT} (see also \cite[Section
6.5]{DPZ}) by reducing the stochastic problem to a deterministic one and then
solving the latter by parabolic methods. Here we shall extend this method to
the setting of UMD spaces using the bilinear It\^o formula of the previous
section.

Define $G:\R^N\to \calL(E)$ as $$G(a) := \prod_{n=1}^N
G_n(a_n).$$ Note that each $G(a)$ is invertible with inverse $G^{-1}(a)
:=(G(a))^{-1} = G(-a)$.
For $t\in [0,T]$ and $\o\in\O$ we define the operators $C_W(t,\o):\D(C_W(t,\o))
\subseteq E\to E$ by
$$\begin{aligned}
\D(C_W(t,\o)) & := \{x\in E:\ G(W(t,\o)) x\in \D(C(t))\}, \\
   C_W(t,\o) & := G^{-1}(W(t,\o))C(t)  G(W(t,\o)),
\end{aligned}
$$
where $W=(W_1, \ldots, W_N)$.
Note that the processes $$G_W(t,\o) := G(W(t,\o)) \ \ \hbox{ and } \ \  G_W^{-1}(t,\o) := G(-W(t,\o))$$
are adapted and pathwise strongly continuous.

In terms of the random
operators $C_W(t)$ we introduce the following pathwise problem:
\begin{equation}\label{detproblem}
\begin{aligned}
V'(t) & = C_W(t) V(t), \ \ t\in[0,T],
\\ V(0) &= u_0,
\end{aligned}
\end{equation}
Notice that \eqref{detproblem} is a special case of \eqref{LinSCP} with $A(t)$
replaced by $C_W(t)$ and with $B_n = 0$. In particular the notion of strong
solution on $(0,T]$ and on $[0,T]$ apply.

Note that if $V$ is a strong solution of \eqref{detproblem} on $(0,T]$, then
almost surely we have $G_W(t)V(t)\in\D(C(t)) =
\D(A(t))\subseteq\bigcap_{n=1}^N \D(B_n^2)$ for almost all $t\in [0,T]$.

\medskip
The next theorem, which extends \cite[Theorem 1]{DPIT} and
\cite[Theorem 1]{DPT} to UMD Banach spaces, relates the problems \eqref{LinSCP}
and \eqref{detproblem}.

\begin{theorem}\label{solequiv}
Let $E$ be a UMD Banach space and assume {\rm (H1)--(H4)} and let $\e\in
[0,T]$ be fixed. For a strongly measurable and adapted process
$V:[0,T]\times\O\to E$ the following assertions are equivalent:
\begin{enumerate}
\item $G_W V$ is a strong solution of \eqref{LinSCP} on $(0,T]$  (resp. on
$[0,T]$);
\item $V$ is a strong solution of \eqref{detproblem} on $(0,T]$ (resp. on
$[0,T]$).
\end{enumerate}
\end{theorem}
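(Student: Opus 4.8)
The strategy is to apply the bilinear Itô formula \eqref{itodual} to connect the two processes, exploiting that $G_W(t)=G(W(t))$ is itself a (vector-valued) stochastic process driven by the $W_n$ whose own Itô dynamics are explicitly computable from the group structure. Suppose first that $V$ is a strong solution of \eqref{detproblem} on $(0,T]$. Then, for a fixed $x^*\in \D(A(t)^*)$-type test class (really a dense subspace $F$ of $E^*$ on which everything is smooth, so that Lemma \ref{Lpw*denseloc} applies at the end), I want to compute $\lb G_W(t)V(t),x^*\rb = \lb V(t), G_W^{-1}(t)^*x^*\rb$ via the bilinear Itô formula, taking $E_1=E$, $E_2=E^*$, $f(x,x^*)=\lb x,x^*\rb$, $\zeta_1 = V$ and $\zeta_2$ the $E^*$-valued process $t\mapsto G(-W(t))^*x^*$. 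The first process $\zeta_1=V$ has no stochastic part (its decomposition is $\zeta_1(0)=u_0$, drift $\psi_1(s)=C_W(s)V(s)$, $\Phi_1=0$), while the second process $\zeta_2$ has drift and diffusion governed by the group generators: since $\frac{d}{dt}G_n(-t) = -B_nG_n(-t)$, applying the finite-dimensional Itô formula to the smooth map $a\mapsto G(-a)^*x^*$ gives that $\zeta_2$ solves an equation with diffusion coefficients $-B_n^*\zeta_2$ and (Itô correction) drift $\tfrac12\sum_n (B_n^2)^*\zeta_2$. Here the commutativity hypothesis (H2) and the finite-dimensionality of $\R^N$ make this computation routine.

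Plugging $\zeta_1,\zeta_2$ into \eqref{itodual}: the cross-variation term $\sum_n \lb \Phi_1(s)e_n,\Phi_2(s)e_n\rb$ vanishes because $\Phi_1=0$; the stochastic integral term becomes $\int_0^t \sum_n \lb V(s), -B_n^*\zeta_2(s)\rb\,dW_n(s) = -\sum_n\int_0^t\lb B_n V(s),\zeta_2(s)\rb\,dW_n(s)$, using that $G_W(s)V(s)\in\D(B_n^2)\subseteq\D(B_n)$ so $B_nV(s)$ makes sense after conjugating back; and the drift term becomes $\int_0^t \lb C_W(s)V(s),\zeta_2(s)\rb + \tfrac12\sum_n\lb V(s),(B_n^2)^*\zeta_2(s)\rb\,ds$. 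Now one uses the definitions: $\lb C_W(s)V(s),\zeta_2(s)\rb = \lb G(-W(s))C(s)G(W(s))V(s), G(-W(s))^*x^*\rb = \lb C(s)G_W(s)V(s),x^*\rb$, and $\tfrac12\sum_n\lb V(s),(B_n^2)^*\zeta_2(s)\rb = \tfrac12\sum_n\lb B_n^2 G_W(s)V(s),x^*\rb$ after the same conjugation; since $C(s)=A(s)-\tfrac12\sum_n B_n^2$ on $\D(A(s))$, the two pieces combine to $\lb A(s)G_W(s)V(s),x^*\rb$. This yields, weakly against $F$,
\[
\lb G_W(t)V(t),x^*\rb = \lb u_0,x^*\rb + \int_0^t\lb A(s)G_W(s)V(s),x^*\rb\,ds - \sum_n\int_0^t\lb B_nG_W(s)V(s),\Phi\rb\,dW_n(s),
\]
wait—more precisely the last term is $-\sum_n\int_0^t\lb B_n G_W(s)V(s),x^*\rb\,dW_n(s)$; tracking the sign here, conjugation of $-B_n^*$ produces $+B_n$ in the appropriate place, so one in fact obtains $+\sum_n\int_0^t\lb B_nG_W(s)V(s),x^*\rb\,dW_n(s)$, which is exactly \eqref{LinSCP} tested against $x^*$. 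An application of Lemma \ref{Lpw*denseloc} (with the stochastic integrability of $B_nG_WV$ coming out of the Itô formula itself) upgrades this to the genuine identity defining a strong solution of \eqref{LinSCP}, on $(0,T]$ or on $[0,T]$ according to whether $V$ is a solution on $(0,T]$ or $[0,T]$; the integrability conditions (1)--(2) in the definition transfer because $G_W$ and $G_W^{-1}$ are pathwise bounded continuous operator-valued processes and $B_n^2R(\lambda,A(\cdot))$ is strongly continuous by (H4), so the path regularity of $A(\cdot)V(\cdot)$ matches that of $A(\cdot)G_WV(\cdot)$.

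The converse direction is symmetric: if $U:=G_WV$ is a strong solution of \eqref{LinSCP}, apply the same bilinear Itô formula to $\lb U(t), G(W(t))^{-*}x^*\rb = \lb G_W^{-1}(t)U(t),x^*\rb = \lb V(t),x^*\rb$, now with $\zeta_1=U$ (having drift $A(s)U(s)$ and diffusion $B_nU(s)$) and $\zeta_2(s)=G(W(s))^{-*}x^*$ (diffusion $+B_n^*\zeta_2$, Itô drift $\tfrac12\sum_n(B_n^2)^*\zeta_2$). The $dW_n$ terms now cancel in pairs — $\lb B_nU(s),\zeta_2(s)\rb + \lb U(s),B_n^*\zeta_2(s)\rb$... actually one of these comes from $\Phi_1$ and one from $\Phi_2$ and they add, while the cross-variation term $\sum_n\lb B_nU(s),B_n^*\zeta_2(s)\rb = \sum_n\lb B_n^2U(s),\zeta_2(s)\rb$ exactly cancels the $\tfrac12\sum_n(B_n^2)^*$ Itô drift of $\zeta_2$ together with the analogous piece — so that the surviving drift is $\lb A(s)U(s),\zeta_2(s)\rb - \tfrac12\sum_n\lb B_n^2U(s),\zeta_2(s)\rb = \lb C(s)U(s),\zeta_2(s)\rb = \lb C_W(s)V(s),x^*\rb$, giving \eqref{detproblem} weakly, hence (using that $\zeta_2$ ranges over a dense subspace) strongly. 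The main obstacle I anticipate is bookkeeping rather than conceptual: (a) making sure the stated hypothesis class $F$ of test functionals is simultaneously dense in $E^*$, invariant (or well-behaved) under the adjoint groups $G_n(\cdot)^*$, and contained in the domains needed for $\lb B_n^2 G_WV,x^*\rb$ to be finite — this is where (H3)--(H4) and duality of UMD (so $E^*$ is UMD, $G_n^*$ are $C_0$-groups on $E^*$ by reflexivity) are used; and (b) verifying in each direction that the \emph{a priori} stochastic integrability hypotheses of the bilinear Itô formula (namely $\Phi_i\in L^0(\O;L^2(0,T;\g(H,E_i)))$) hold for the one-dimensional noise $H=\R^N$ — but here $\g(\R^N,E_i)\cong E_i^N$, so this reduces to $B_nU\in L^0(\O;L^2(0,T;E))$ resp. the trivial $\Phi_1=0$, which follows from condition (2) of strong solution together with pathwise boundedness of $G_W^{\pm1}$. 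Finally, since the argument is run on $[\e,T]$ with $\e>0$ for the "$(0,T]$" case and on $[0,T]$ for the other, and path continuity lets exceptional null sets be chosen uniformly in $\e$, the two regimes are handled in one stroke.
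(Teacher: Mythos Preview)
Your approach is the same as the paper's: apply the bilinear It\^o formula \eqref{itodual} to the pairing of the solution process with an adjoint group process $G_W^{\pm *}x^*$, after first deriving the It\^o dynamics of the latter from the ordinary It\^o formula on $\R^N$. The paper uses exactly this scheme, and also invokes Lemma~\ref{Lpw*denseloc} in the $(2)\Rightarrow(1)$ direction.

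Two concrete gaps need fixing. First, your dual processes are systematically mis-assigned. In $(2)\Rightarrow(1)$ you want $\lb G_W(t)V(t),x^*\rb = \lb V(t), G_W(t)^*x^*\rb$, so $\zeta_2(t)=G(W(t))^*x^*$, \emph{not} $G(-W(t))^*x^*$; with the correct choice the diffusion of $\zeta_2$ is $+B_n^*\zeta_2$ and the drift is $\tfrac12\sum_n B_n^{*2}\zeta_2$, and the desired identity drops out with no sign-chasing. In $(1)\Rightarrow(2)$ you correctly take $\zeta_2(t)=G_W^{-1*}(t)x^*$, but its diffusion is $-B_n^*\zeta_2$ (differentiate $a\mapsto G(-a)^*x^*$), not $+B_n^*\zeta_2$; this is what makes the $dW_n$ terms cancel against the $\lb B_nU,\zeta_2\rb$ coming from $\Phi_1$, while the cross-variation term $-\sum_n\lb B_nU,B_n^*\zeta_2\rb=-\sum_n\lb B_n^2U,\zeta_2\rb$ combines with the $+\tfrac12\sum_n B_n^{*2}\zeta_2$ drift to leave $-\tfrac12\sum_n\lb B_n^2U,\zeta_2\rb$, turning $A$ into $C$.

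Second, you must actually name and justify the dense test class $F\subseteq E^*$. The paper takes $F=\bigcap_{m,n=1}^N \D(B_n^*B_m^*)$ and proves density via reflexivity of the UMD space $E$: for $x\neq 0$ set $y=\prod_n R(\lambda,B_n)^2x\neq 0$, pick $y^*$ with $\lb y,y^*\rb\neq 0$, and observe $x^*:=\prod_n R(\lambda,B_n^*)^2y^*\in F$ separates $x$; so $F$ is weak$^*$-dense, hence norm-dense by reflexivity. This choice of $F$ is exactly what is needed for $a\mapsto G(\pm a)^*x^*$ to be of class $C^2$ so that Theorem~\ref{thm:itoformula} applies on $E^*$.
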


\begin{proof}
First we claim that $\bigcap_{m,n=1}^N \D(B_n^{*} B_m^*)$ is
norm-dense in $E^*$. Since $E$ is reflexive it is sufficient to
prove that $\bigcap_{m,n=1}^N \D(B_n^{*} B_m^*)$ is weak$\s$-dense
in $E\s$. Fix an $x\in E$, $x\not=0$, and some $\lambda\in
\bigcap_{n=1}^N\varrho(B_n)$, and put $y := \prod_{n=1}^N R(\lambda,
B_n)^2 x$. Since by (H2) the resolvents $R(\l,B_n)$ commute we have
$y\in  \D\Big(\prod_{n=1}^N B_n^2\Big)$. Since $y\neq 0$ we can find
$y^*\in E^*$ such that $\lb y, y^*\rb \neq 0$. Then by (H2), the
resolvents $R(\lambda,B_n^*)$ commute and $x^* := \prod_{n=1}^N
R(\lambda,B_n^*)^2 y^*\in \bigcap_{m,n=1}^N \D(B_n^{*}B_m^{*})$ and
it is obvious that $\lb x, x^*\rb \neq 0$. This proves the claim.

We will now turn to the proof of the equivalence of strong solutions on
$(0,T]$. The equivalence of strong solutions
on $[0,T]$ follows by taking
$\e=0$ in the proofs below.

(1) $\Rightarrow$ (2): Let $\e>0$ be arbitrary. Since $U:= G_W V$ is a strong
solution of \eqref{LinSCP} on $(0,T]$, almost surely we have $G_W(t)V(t)\in
\D(C(t))$ for almost all $t\in [\e,T]$. Moreover, for $n=1, \ldots, N$,
\[
\begin{aligned}
B^2_n U(t) & = B_n^2 R(\lambda, A(t)) (\lambda - A(t)) U
\\ & =  B_n^2 R(\lambda, A(t)) \lambda U(t) + B_n^2 R(\lambda, A(t)) A(t) U(t).
\end{aligned}
\]
Therefore, (H4) implies that  $B_n^2 G_W V = B_n^2 U$ is in $L^1(\e, T;E)$
almost surely. We conclude that $t\mapsto C(t)G_W(t)V(t)$ belongs to
$L^1(\e,T;E)$ almost surely. Hence $t\mapsto C_W(t) V$ belongs to
$L^1(\e,T;E)$ almost surely.

Let $x^*\in \bigcap_{m,n=1}^N \D(B_n^*B_m^*)$ be fixed. The function
$f:\R^N\to E^*$ defined by $f(a) := G^{-1*}(a)x^*$ is twice
continuously differentiable with
$$\frac{\partial f}{\partial a_n}(a)
= -G^{-1*}(a)B^*_n  x^*, \qquad \frac{\partial^2 f}{\partial
a_n^2}(a) = G^{-1*}(a)B_n^{*2}  x^*.$$  By the It\^o formula Theorem
\ref{thm:itoformula} (applied to the Banach space $E\s$ and the
Hilbert space $H=\R^N$) it follows that the processes $ G_W^{-1*}
B_n^* x^*$ are stochastically integrable with respect to $W_n$ on
$[\e,T]$ and that almost surely, for all $t\in [\e,T]$,
$$
\begin{aligned}
\ & G_W^{-1*}(t) x^* - G_W^{-1*}(\e) x^*
\\ & \qquad
=
-\sum_{n=1}^N \int_\e^t G_W^{-1*}(s) B_n^* x^*\, d W_n(s) + \frac12
\sum_{n=1}^N\int_\e^t G_W^{-1*}(s)  B_n^{2*}x^* \, ds.
\end{aligned}
$$
By \eqref{itodual} applied to $U$ and $G_W^{-1*} x^*$ we obtain that  almost surely, for all
$t\in [\e,T]$,
\[\begin{aligned}
\ &  \lb  V(t), x^*\rb - \lb V(\e), x^*\rb
 = \lb U(t), G_W^{-1*}(t)  x^* \rb -  \lb U(\e), G_W^{-1*}(\e)  x^* \rb
\\ & \qquad = \int_\e^t \frac12  \sum_{n=1}^N \lb U(s), G_W^{-1*}(s) B_n^{*2} x^*\rb +
\lb A(s) U(s), G_W^{-1*}(s) x^*\rb \, ds
\\ & \qquad \qquad + \sum_{n=1}^N \int_\e^t - \lb U(s),   G_W^{-1*}(s)B_n^* x^*\rb
+  \lb B_n U(s), G_W^{-1*}(s) x^*\rb  \, dW_n(s)
\\ & \qquad\qquad - \sum_{n=1}^N \int_\e^t \lb B_n U(s), G_W^{-1*}(s) B_n^*
x^*\rb \, ds
\\ & \qquad  = \int_\e^t \lb G_W^{-1}(s) C(s) U(s), x^*\rb \, ds
\\ &  \qquad
= \int_\e^t \lb C_W(s) V(s), x^*\rb \, ds.
\end{aligned}
\]
Since $C_W V$ has paths in $L^1(\e,T;E)$ almost surely, it follows that,
almost surely, for all $t\in[\e,T]$,
$$
\lb  V(t), x^*\rb - \lb V(\e), x^*\rb  = \Big< \int_\e^t C_W(s) V(s) \, ds,
x^*\Big>
$$
By approximation this identity extends to arbitrary $x^*\in E^*$. By
strong measurability, this shows that almost surely, for all $t\in [\e,T]$,
$$
V(t) - V(\e) = \int_\e^t C_W(s) V(s) \, ds.
$$

(2) $\Rightarrow$ (1):  Put $U:=G_W V$. Let $\e>0$ be arbitrary. Since $V$ is
a strong solution of \eqref{detproblem} on $(0,T]$, as before (H4) implies
that almost surely we have $U(t)\in \D(A(t))$ for all $t\in [0,T]$ and
$t\mapsto A(t)U(t)$ belongs to $L^1(\e,T;E)$.

Let $x^*\in \bigcap_{m,n=1}^N \D(B_n^{*}B_m^*)$ be fixed. Applying
the It\^o formula in the same way as before, the processes
$G_W^*B_n^* x^*$ are stochastically integrable with respect to $W_n$
on $[\e,T]$ and almost surely we have, for all $t\in [\e,T]$,
$$
\begin{aligned} G_W^*(t)x^* - G_W^*(\e) x^*
=\sum_{n=1}^N \int_\e^t  G_W^*(s) B^*_n x^* \, d W_n(s) +  \frac12
\sum_{n=1}^N\int_\e^t G_W^*(s) B_n^{*2} x^* \, d s.
\end{aligned}
$$
By assumption we have $C_W V \in L^1(\e,T;E)$ almost surely. Hence
we may apply \eqref{itodual} with $V$ and $G_W^* x^*$. It follows
that almost surely, for all $t\in [\e, T]$,
\[\begin{aligned}
\ & \lb U(t), x^*\rb - \lb U(\e), x^*\rb
\\ & \qquad = \lb V(t), G_W^*(t) x^*\rb - \lb V(\e), G_W^*(t) x^*\rb
\\ & \qquad = \int_\e^t \frac12 \sum_{n=1}^N \lb V(s),  G_W^*(s) B_n^{*2}x^*\rb +
\lb G_W^{-1}(s) C(s) G_W(s) V(s), G_W^*(s) x^*\rb \, ds
\\ &\qquad  \qquad \qquad  + \sum_{n=1}^N \int_\e^t \lb V(s),  G_W^*(s)B_n^* x^*\rb  \, d W_n(s)
\\ &\qquad  = \int_\e^t \lb A(s) G_W(s)V(s), x^*\rb \, ds
+ \sum_{n=1}^N \int_\e^t \lb B_n G_W(s) V(s), x^*\rb \, d W_n(s)
\\ & \qquad = \int_\e^t \lb A(s) U(s), x^*\rb \, ds
+ \sum_{n=1}^N \int_\e^t \lb B_n U(s), x^*\rb \, dW_n(s).
\end{aligned}
\]
Since $G_W^{-1}CU=C_W V \in L^1(\e,T;E)$ almost surely, we have
$CU\in L^1(\e,T;E)$ almost surely, and therefore by (H4) we also have
$AU\in L^1(\e,T;E)$ almost surely. Also, $V$ has continuous paths almost surely,
and therefore the same is true for $U = G_W V$.
Thanks to the claim we are now in a position to apply Lemma \ref{Lpw*denseloc}
on the interval $[\e,T]$
(for the Hilbert space $H=\R^N$ and the process
$\zeta = U - U(\e) - \int_\e^\cdot A(s)U(s)\,ds$).
We obtain that the processes
$B_n U$ are stochastically
integrable with respect to $W_n$ on $[\e,T]$ and that
almost surely we have, for all $t\in [\e,T]$,
$$
U(t) - U(\e) = \int_\e^t A(s) U(s)\, ds + \sum_{n=1}^N \int_\e^t B_n U(s) \, d
W_n(s).
$$
\end{proof}

\section{The deterministic problem: Acquistapace-Terreni
conditions}\label{sec:AT}

\newcommand{\A}{{\mathcal C}}

Consider the non-autonomous Cauchy problem:
\begin{equation}\label{nCP}
\begin{aligned}
\frac{d u}{dt}(t) &= \A(t) u(t) \ \ t\in [0,T],
\\ u(0) &= x,
\end{aligned}
\end{equation}
where $\A(t): \D(\A(t))\subseteq E\to E$ are linear operators.
We study this equation assuming the
Acquistapace-Terreni conditions \cite{AT2}:

\begin{enumerate}
\item[(AT1)] For all $t\in [0,T]$, $\A(t):\D(\A(t))\subseteq E\to E$
is a closed linear operator and there exists $\theta\in \big( \frac{\pi}{2},
\pi \big)$ such that for all $t\in [0,T]$ we have $$\varrho(\A(t))\supseteq
\overline{S_{\theta}},$$ where $S_{\theta}=\{z\in \C\setminus\{0\}: \ |\arg z|
< \theta\}$. Moreover there exists a constant  $K\geq 0$ such that for all
$t\in [0,T]$
 we have
\[ \n R(\lambda, \A(t)) \n \leq \frac{K}{1+|\lambda|}, \qquad \l\in  S_{\theta}.\]
\item[(AT2)] There exist $k\ge 1$ and constants $L\geq 0$,
$\alpha_1, \ldots, \alpha_k$, and $\beta_1, \ldots, \beta_k\in \R$ with $0\leq
\beta_i<\alpha_i\leq 2$ such that for all $t,s\in [0,T]$ we have
\[\|\A(t) R(\lambda, \A(t)) [\A^{-1}(t) - \A^{-1}(s)]\|\leq L \sum_{i=1}^k |t-s|^{\alpha_i} |\lambda|^{\beta_i-1},\qquad \l\in  S_{\theta}.\]
\end{enumerate}
We may assume $\delta:=\min\{\alpha_i - \beta_i\}\in (0,1)$.

\medskip

We say that $u$ is a
{\em classical solution} of \eqref{nCP} if
\begin{enumerate}
\item $u\in C([0,T];E)\cap C^1((0,T],E)$;
\item $u(t)\in \D(\A(t))$ for all $t\in (0,T]$;
\item  $u(0) = x$ and $u'(t) = \A(t) u(t)$ for all $t\in (0,T]$.
\end{enumerate}

Assuming that $x\in \D(\A(0))$ we say that $u$ is a {\em strict solution} of
\eqref{nCP} if
\begin{enumerate}
\item $u\in C^1([0,T];E)$;
\item $u(t)\in \D(\A(t))$ for all $t\in [0,T]$;
\item  $u(0) = x$ and $u'(t) = \A(t) u(t)$ for all $t\in [0,T]$.
\end{enumerate}

As a special case of \cite[Theorems 6.1, 6.3 and 6.5]{AT2} and \cite[Theorem
5.2]{Acq} we have the following result. For a closed densely defined operator
$(\mathscr{A},\D(\mathscr{A}))$ on $E$ we use the usual notation $D_\mathscr{A}(\theta,p) =
(E,\D(\mathscr{A}))_{\theta,p}$ for the real interpolation spaces.

\begin{theorem}\label{thm:AT}
If the operators $\A(t)-\mu$ satisfy {\rm (AT1)} and {\rm (AT2)} for
some $\mu\in \R$, then the following assertions hold:
\begin{enumerate}
\item If $x\in \overline{\D(\A(0))}$, then there exists a unique classical solution $u$ of
\eqref{nCP}.

\item If $x\in \D_{\A(0)}(1-\sigma, \infty)$ with $\sigma\in (0,1)$, then there exists a unique classical solution $u$ of
\eqref{nCP}. Moreover $\A u\in L^p(0,T;E)$ for all $1\leq
p<\sigma^{-1}$.

\item If $x\in \D(\A(0))$, then there exists a unique strict solution $u$ of
\eqref{nCP}.
\end{enumerate}
\end{theorem}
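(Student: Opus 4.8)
The plan is to deduce the theorem from the construction of the parabolic evolution family associated with an Acquistapace--Terreni system carried out in \cite{AT2}, after first disposing of the shift parameter $\mu$. If $\A(t)-\mu$ satisfies {\rm (AT1)} and {\rm (AT2)}, then a function $v$ is a classical (resp. strict) solution of $v'=(\A(t)-\mu)v$, $v(0)=x$, if and only if $u(t):=e^{\mu t}v(t)$ is a classical (resp. strict) solution of \eqref{nCP}, since $u'(t)=\mu e^{\mu t}v(t)+e^{\mu t}v'(t)=e^{\mu t}\A(t)v(t)=\A(t)u(t)$; moreover $\A u\in L^p(0,T;E)$ if and only if $\A v\in L^p(0,T;E)$, because $t\mapsto e^{\mu t}$ is bounded above and below on $[0,T]$. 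Since $\D(\A(t)-\mu)=\D(\A(t))$ with equivalent graph norms, the real interpolation spaces $\D_{\A(0)-\mu}(1-\sigma,\infty)$ and $\D_{\A(0)}(1-\sigma,\infty)$ coincide and $\overline{\D(\A(0)-\mu)}=\overline{\D(\A(0))}$. Hence we may and do assume $\mu=0$.

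Under {\rm (AT1)} and {\rm (AT2)}, \cite{AT2} provides a strongly continuous evolution family $\{U(t,s):0\le s\le t\le T\}\subseteq\calL(E)$ with $U(s,s)=I$, $U(t,r)U(r,s)=U(t,s)$, $U(t,s)E\subseteq\D(\A(t))$ for $t>s$, the forward equation $\partial_t U(t,s)=\A(t)U(t,s)$ for $t>s$, the backward equation $\partial_s U(t,s)x=-U(t,s)\A(s)x$ for $x\in\D(\A(s))$, and the standard parabolic bounds, in particular $\n\A(t)U(t,0)\n_{\calL(E)}\le Ct^{-1}$ and $\n\A(t)U(t,0)x\n\le C\n x\n_{\D(\A(0))}$ for $x\in\D(\A(0))$, uniformly in $t$. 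The candidate solution is $u(t):=U(t,0)x$ in all three cases. Part (1) is \cite[Theorem 6.1]{AT2}: for $x\in\overline{\D(\A(0))}$ the map $t\mapsto U(t,0)x$ lies in $C([0,T];E)\cap C^1((0,T];E)$, takes values in $\D(\A(t))$ for $t>0$, and solves \eqref{nCP} (the hypothesis $x\in\overline{\D(\A(0))}$ is exactly what makes $U(t,0)x\to x$ as $t\downarrow0$). Part (3) is \cite[Theorem 6.5]{AT2}: when $x\in\D(\A(0))$ one additionally gets continuity of $t\mapsto\A(t)U(t,0)x$ up to $t=0$, so $u\in C^1([0,T];E)$ is a strict solution. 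For part (2), since $1-\sigma\in(0,1)$ we have $\D_{\A(0)}(1-\sigma,\infty)\subseteq\overline{\D(\A(0))}$, so (1) already yields the unique classical solution $u(t)=U(t,0)x$; the new point is that interpolating the two displayed bounds with parameter $1-\sigma$ gives $\n\A(t)u(t)\n\le C_\sigma t^{-\sigma}\n x\n_{\D_{\A(0)}(1-\sigma,\infty)}$ (this is the content of \cite[Theorem 6.3]{AT2}, see also \cite[Theorem 5.2]{Acq}), and since $t\mapsto t^{-\sigma}$ lies in $L^p(0,T)$ precisely when $p\sigma<1$, we conclude $\A u\in L^p(0,T;E)$ for all $1\le p<\sigma^{-1}$.

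It remains to prove uniqueness. Let $u_1,u_2$ be two classical solutions of \eqref{nCP} with the same initial value and put $w:=u_1-u_2$, so that $w\in C([0,T];E)\cap C^1((0,T];E)$, $w(0)=0$, and $w(s)\in\D(\A(s))$, $w'(s)=\A(s)w(s)$ for $s\in(0,T]$. Fix $t\in(0,T]$. For $s\in(0,t)$ the product rule (justified by the strong continuity of $U$, the differentiability of $w$, and the backward equation, using that $w(s)\in\D(\A(s))$) gives
\[
\frac{d}{ds}\big(U(t,s)w(s)\big)=-U(t,s)\A(s)w(s)+U(t,s)w'(s)=0,
\]
so $s\mapsto U(t,s)w(s)$ is constant on $(0,t)$. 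By continuity of $U$ and of $w$ it extends continuously to $[0,t]$, whence the constant equals both $\lim_{s\downarrow0}U(t,s)w(s)=U(t,0)w(0)=0$ and $U(t,t)w(t)=w(t)$; thus $w(t)=0$ for every $t$. This proves uniqueness of the classical solution, and since every strict solution is in particular classical, the strict solution is unique as well.

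The substantive content of the statement lies not in this reduction but in the construction of $U(t,s)$ and the derivation of its regularity under the weak H\"older-type condition {\rm (AT2)} --- the delicate parametrix/Volterra-equation argument of \cite{AT2} --- together with the sharp interpolation estimate of \cite[Theorem 5.2]{Acq}; the only genuinely new (and elementary) step here is the uniqueness argument above, the rest being a specialization of the cited results to our hypotheses.
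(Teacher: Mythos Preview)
Your proposal is correct and follows the paper's approach exactly: the paper does not give a proof of this theorem at all, but simply presents it as a special case of \cite[Theorems~6.1, 6.3 and 6.5]{AT2} and \cite[Theorem~5.2]{Acq}, which are precisely the results you invoke. Your reduction to $\mu=0$ and the explicit uniqueness argument via the backward equation are standard elaborations that the paper omits; there is nothing to correct.
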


Assuming Hypothesis (H2), we study the problem
\begin{equation}\label{GhAGh}
\begin{aligned}
\frac{d u}{dt}(t) &= \A_h(t) u(t) \ \ t\in [0,T],
\\ u(0) &= x,
\end{aligned}
\end{equation}
Here $\A_h(t) = G^{-1}(h(t)) \A(t) G(h(t))$, with $\D(\A_h(t)) =
\{x\in E: G(h(t)) x\in \D(\A(t))\}$, $G$ is as in Section
\ref{appl}, and $h:[0,T]\to \R^N$ is a measurable function. Notice
that \eqref{detproblem} may be seen as the special case of
\eqref{GhAGh}, where $\A = C$ and $h=W$.

The following condition is introduced in \cite[Theorem 6.30]{DPZ}
(see also \cite[Proposition 1]{DPIT}) in the time independent case.
Let $(\A(t))_{t\in [0,T]}$ be densely defined and such that $0\in
\varrho(\A(t))$ for all $t\in [0,T]$. Assuming Hypothesis (H2) we
consider the following Hypothesis (K) (which may be weakened
somewhat, cf. \cite[Remark 1.2]{ATito}).

\begin{enumerate}
\item[(K)]\label{H14}
We have $0\in\varrho(\A(t))$ for all $t\in [0,T]$ and there exist uniformly
bounded functions $K_n:[0,T]\to \calL(E)$ such that for all $t\in [0,T]$, all
$n=1, \ldots, N$, and all $x\in \D(B_n)$ we have $B_n \A^{-1}(t) x \in
\D(\A(t))$ and
$$\A(t) B_n \A^{-1}(t) x =  B_n x+ K_n(t)x.$$
\end{enumerate}
The latter may be rewritten as the commutator condition:
$$[\A(t), B_n] \A^{-1}(t) x =  K_n(t)x.$$
In many cases it is enough to consider only $x\in \D(\A(t))$ instead of $x\in
\D(B_n)$ (cf. \cite[Proposition A.1]{ATito}).

Assume that (AT1) and (AT2) hold for the operators $\A(t)$. If {\rm(K)} holds
for the operators $\A(t)$, then the uniform boundedness of $t\mapsto
R(\lambda, \A(t))$ can be used to check that for all $\lambda>0$, {\rm (K)}
holds for the operators $\A(t)-\lambda$ for all $\lambda>0$.

The following lemma lists some consequences of Hypothesis (K).

\begin{lemma}\label{lem:Cons}
Let $(\A(t))_{t\in [0,T]}$ be closed densely defined operators such
that $0\in \varrho(A(t))$ for all $t\in [0,T]$. Assume Hypotheses
{\rm (H2)} and {\rm (K)}.
\begin{enumerate}
\item\label{Consb} For all $n=1, \ldots, N$, $s\in \R$ and $t\in [0,T]$,
$G_n(s)$ leaves $\D(\A(t))$ invariant and
\[\A(t) G_n(s) \A^{-1}(t) = e^{s(B_n + K_n(t))}.\]
\item\label{Consc} For all $R\geq 0$ there is a constant $M_R\geq 0$ such that
for all $n=1, \ldots, N$, $|s|\leq R$ and $t\in [0,T]$ we have
\[\|\A(t) G_n(s) \A^{-1}(t) - G_n(s)\|\leq M_R|s| .\]
\end{enumerate}
\end{lemma}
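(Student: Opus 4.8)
The plan is to derive both assertions from Hypothesis (K) essentially by ``exponentiating'' the commutator identity. For part \eqref{Consb}, fix $n$ and $t$, and consider the operator $A_n(t) := B_n + K_n(t)$, which is a bounded perturbation of the generator $B_n$ of a $C_0$-group and hence itself generates a $C_0$-group $(e^{sA_n(t)})_{s\in\R}$. The idea is to show that the two operator families $s\mapsto \A(t)G_n(s)\A^{-1}(t)$ and $s\mapsto e^{sA_n(t)}$ coincide. First I would check that $s\mapsto \A(t)G_n(s)\A^{-1}(t)$, a priori defined on the domain where it makes sense, actually extends to a $C_0$-group: because (K) asserts $B_n\A^{-1}(t)$ maps $\D(B_n)$ into $\D(\A(t))$ and $\A(t)B_n\A^{-1}(t) = B_n + K_n(t)$ on $\D(B_n)$, the operator $\A(t)(\cdot)\A^{-1}(t)$ conjugates $B_n$ (as an unbounded operator) into $A_n(t)$, so by the standard similarity principle it conjugates the group $G_n$ into the group generated by $A_n(t)$. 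More carefully: for $x\in\D(B_n)=\D(A_n(t))$ one has $\frac{d}{ds}\big(\A(t)G_n(s)\A^{-1}(t)x\big) = \A(t)G_n(s)B_n\A^{-1}(t)x$; using that $G_n(s)$ commutes with $\A^{-1}(t)$-conjugation in the appropriate sense and the identity from (K), this derivative equals $A_n(t)\big(\A(t)G_n(s)\A^{-1}(t)x\big)$, so by uniqueness of the Cauchy problem for the generator $A_n(t)$ we get $\A(t)G_n(s)\A^{-1}(t)x = e^{sA_n(t)}x$ first on $\D(B_n)$ and then, both sides being bounded, on all of $E$. In particular $G_n(s)$ leaves $\D(\A(t))$ invariant.

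For part \eqref{Consc}, the point is a quantitative estimate on $e^{sA_n(t)} - G_n(s) = \A(t)G_n(s)\A^{-1}(t) - G_n(s)$. I would use the variation-of-constants (Duhamel) representation
\[
e^{sA_n(t)} - G_n(s) = \int_0^s G_n(s-\sigma)\,K_n(t)\,e^{\sigma A_n(t)}\,d\sigma,
\]
valid since $A_n(t) = B_n + K_n(t)$ with $K_n(t)$ bounded. Taking norms, using that the groups $G_n$ and $e^{\cdot A_n(t)}$ are bounded uniformly for $|s|\le R$ and $t\in[0,T]$ (the former by Hypothesis (H2), the latter because $\|K_n(t)\|$ is uniformly bounded by (K), so a Gronwall/perturbation bound gives a uniform exponential bound on $e^{\sigma A_n(t)}$ for $|\sigma|\le R$, $t\in[0,T]$), and that $\sup_{t}\|K_n(t)\| =: \kappa_n < \infty$, we obtain
\[
\|\A(t) G_n(s)\A^{-1}(t) - G_n(s)\| \le |s|\cdot \sup_{|\sigma|\le R,\, t}\|G_n(\sigma)\|\,\kappa_n\,\sup_{|\sigma|\le R,\,t}\|e^{\sigma A_n(t)}\| =: M_R|s|,
\]
which is the claimed estimate with $M_R$ independent of $n$ after taking a maximum over the finitely many $n$.

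The main obstacle I anticipate is domain bookkeeping in part \eqref{Consb}: Hypothesis (K) is stated for $x\in\D(B_n)$ (not all of $E$), and $\A(t)$ is unbounded, so one must be careful that the formal computation $\frac{d}{ds}\A(t)G_n(s)\A^{-1}(t)x = A_n(t)\A(t)G_n(s)\A^{-1}(t)x$ is legitimate — in particular that $\A(t)G_n(s)\A^{-1}(t)x$ stays in $\D(A_n(t))=\D(B_n)$ and that the map is differentiable in the strong sense. This is handled by first establishing the identity on a core (e.g. $\bigcap_k \D(B_n^k)$, or the range of $R(\lambda,B_n)^2$ as in the proof of Theorem \ref{solequiv}), then extending by density since both sides of the final identity are bounded operators. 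Once the identity of \eqref{Consb} is in place, \eqref{Consc} is routine via Duhamel as sketched.
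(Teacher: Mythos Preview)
Your proposal is correct and is essentially the same approach as the paper's: the paper simply cites \cite[Proposition~1]{DPIT} for part~\eqref{Consb} and \cite[Corollary~III.1.11]{EN} for part~\eqref{Consc}, and your argument unpacks precisely the content of those references --- the similarity/uniqueness argument for the generator identity in~\eqref{Consb}, and the Duhamel (variation-of-constants) formula for the bounded perturbation $K_n(t)$ in~\eqref{Consc}. Your remark about domain bookkeeping in~\eqref{Consb} is well taken and is handled in \cite{DPIT} exactly along the lines you sketch.
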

\begin{proof}
The first assertion follows from the proof of \cite[Proposition 1]{DPIT} and
the second from a standard perturbation result, cf. \cite[Corollary
III.1.11]{EN}.
\end{proof}

We can now formulate a result that related the problems \eqref{nCP} and
\eqref{GhAGh}.
\begin{proposition}\label{DPITresult}
Let $(\A(t))_{t\in [0,T]}$ be closed densely defined operators such
that $0\in \varrho(\A(t))$ for all $t\in [0,T]$. Assume Hypotheses
{\rm (H2)} and {\rm (K)}. Let $h:[0,T]\to \R^N$ be H\"older
continuous with parameter $\alpha\in (0,1]$ and define the similar
operators $(\A_h(t))_{t\in [0,T]}$ as
\[\A_h(t) = G^{-1}(h(t)) \A(t) G(h(t)) \ \ \text{with $D(\A_h(t)) = \{x\in E: G(h(t)) x \in D(\A(t))\}$.}\]
If the operators $\A(t)$ satisfy {\rm (AT1)} and {\rm (AT2)}
with $[(\alpha_1, \beta_1), \ldots, (\alpha_k, \beta_k)]$, then the operators
$\A_h(t)$ satisfy {\rm (AT1)} and {\rm (AT2)} with $[(\alpha_1,
\beta_1), \ldots, (\alpha_k, \beta_k), (\alpha, 0)]$.
\end{proposition}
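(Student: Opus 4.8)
The plan is to verify (AT1) and (AT2) directly for the similar operators $\A_h(t) = G^{-1}(h(t))\A(t)G(h(t))$. The key structural fact, which I would establish first, is the resolvent identity
\[
R(\lambda, \A_h(t)) = G^{-1}(h(t)) R(\lambda, \A(t)) G(h(t)), \qquad \lambda\in S_\theta,
\]
valid because $\A_h(t)$ is similar to $\A(t)$ via the bounded invertible operator $G(h(t))$, whose norm is uniformly bounded in $t$ since $h$ maps into a compact subset of $\R^N$ and each $G_n$ is a $C_0$-group. This immediately gives $\varrho(\A_h(t))\supseteq\overline{S_\theta}$ with a resolvent bound $\|R(\lambda,\A_h(t))\|\le K'/(1+|\lambda|)$, the constant $K'$ absorbing $\sup_t\|G(h(t))\|\,\|G^{-1}(h(t))\|$; this proves (AT1) for $\A_h(t)$.

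For (AT2) I would start from the identity $\A_h^{-1}(t) = G^{-1}(h(t))\A^{-1}(t)G(h(t))$ and expand the difference
\[
\A_h(t)R(\lambda,\A_h(t))\big[\A_h^{-1}(t) - \A_h^{-1}(s)\big]
= G^{-1}(h(t))\,\A(t)R(\lambda,\A(t))\big[\A^{-1}(t)G(h(t)) - G(h(t))\A_h^{-1}(s)G(h(t))^{-1}\cdot G(h(t))\big].
\]
More cleanly, I would split $\A_h^{-1}(t) - \A_h^{-1}(s)$ by inserting intermediate terms so as to separate the two sources of $t$--$s$ dependence: the variation of $\A^{-1}$ (controlled by the original (AT2) with exponents $(\alpha_i,\beta_i)$) and the variation of $G(h(t))$ versus $G(h(s))$ (controlled by the Hölder continuity of $h$ and the local Lipschitz bound $\|G(a) - G(b)\|\le C|a-b|$ for $a,b$ in a bounded set, giving a term of order $|t-s|^\alpha$). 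Writing $g(t):=G(h(t))$, one has
\[
\A_h^{-1}(t) - \A_h^{-1}(s) = g(t)^{-1}\big[\A^{-1}(t) - \A^{-1}(s)\big]g(t) + \big(g(t)^{-1} - g(s)^{-1}\big)\A^{-1}(s)g(t) + g(s)^{-1}\A^{-1}(s)\big(g(t) - g(s)\big).
\]
Multiplying on the left by $\A_h(t)R(\lambda,\A_h(t)) = g(t)^{-1}\A(t)R(\lambda,\A(t))g(t)$, the first term contributes $g(t)^{-1}\A(t)R(\lambda,\A(t))[\A^{-1}(t) - \A^{-1}(s)]\,g(t)$, which is bounded by the original (AT2) estimate times $\sup_t\|g(t)\|\,\|g(t)^{-1}\|$, i.e. by $L'\sum_i|t-s|^{\alpha_i}|\lambda|^{\beta_i-1}$.

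The remaining two terms are where Hypothesis (K) and Lemma \ref{lem:Cons} enter, and I expect this to be the main obstacle: $\A(t)R(\lambda,\A(t))$ applied to something involving $\A^{-1}(s)$ (at the "wrong" time $s$) is not a priori controlled, so one cannot naively pull $\A(t)R(\lambda,\A(t))$ through. The idea is to use Lemma \ref{lem:Cons}\eqref{Consb}--\eqref{Consc} to commute $G_n(\cdot)$ past $\A(t)^{\pm1}$ up to the bounded correction $e^{s(B_n+K_n(t))}$ and the estimate $\|\A(t)G_n(s)\A^{-1}(t) - G_n(s)\|\le M_R|s|$. Concretely, $\A(t)R(\lambda,\A(t))g(t)$ can be rewritten using $\A(t)g(t)\A^{-1}(t) = \prod_n e^{h_n(t)(B_n+K_n(t))}$, which is a bounded operator uniformly in $t$ and $\lambda$ (in fact $\A(t)g(t)\A^{-1}(t)\cdot\A(t)R(\lambda,\A(t))\A^{-1}(t) = g(t)$-conjugate expressions); after this rearrangement the factors $\A^{-1}(s)(g(t) - g(s))$ and $(g(t)^{-1} - g(s)^{-1})\A^{-1}(s)$ become $g(t) - g(s)$ sandwiched between bounded operators, contributing $\le C|h(t) - h(s)|\le C'|t-s|^\alpha = C'|t-s|^\alpha|\lambda|^{0-1}\cdot(1+|\lambda|)$, and the spare factor $1+|\lambda|$ is absorbed using $\|R(\lambda,\A_h(t))\|\le K'/(1+|\lambda|)$ from an extra resolvent already present. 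This yields precisely a term of the form $L''|t-s|^\alpha|\lambda|^{-1}$, i.e. the new pair $(\alpha,0)$. Collecting all three contributions gives (AT2) for $\A_h(t)$ with the exponent list $[(\alpha_1,\beta_1),\ldots,(\alpha_k,\beta_k),(\alpha,0)]$, which is the claim. The only care needed is bookkeeping of which resolvent/$\A(t)R(\lambda,\A(t))$ factors are bounded uniformly in $\lambda\in S_\theta$ versus which carry a power of $|\lambda|$, and making sure the commutator corrections from (K) stay uniformly bounded — both of which are guaranteed by (AT1), the uniform boundedness of the $K_n$, and compactness of $h([0,T])$.
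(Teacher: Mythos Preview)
Your overall plan matches the paper's --- (AT1) by similarity, and (AT2) by splitting $\A_h^{-1}(t) - \A_h^{-1}(s)$ into an $\A^{-1}$-variation piece (handled by the original (AT2)) and $G(h(\cdot))$-variation pieces (handled via (K) and the H\"older continuity of $h$) --- and your three-term split is a harmless variant of the paper's two-term split. But your accounting for the $|\lambda|^{-1}$ factor in the new $(\alpha,0)$ term is wrong. You claim the $G$-variation terms contribute $C'|t-s|^\alpha$ and then propose to absorb a factor $(1+|\lambda|)$ into ``an extra resolvent already present''. There is no extra resolvent: the only one in $\A_h(t)R(\lambda,\A_h(t))[\cdots]$ is already consumed by $\A_h(t)$, and $\|\A(t)R(\lambda,\A(t))\| = \|\lambda R(\lambda,\A(t)) - I\| \le 1+K$ uniformly in $\lambda$, with no decay. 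As written, your bound for those terms would be $C(1+K)|t-s|^\alpha$, which is not of the required form $|t-s|^{\alpha}|\lambda|^{-1}$.

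The correct source of the $|\lambda|^{-1}$ is the combination $\A(t)R(\lambda,\A(t))\A^{-1}(s)$, once $\A^{-1}(s)$ has been brought \emph{directly} to the right of $\A(t)R(\lambda,\A(t))$. Writing
\[
\A(t)R(\lambda,\A(t))\A^{-1}(s) = R(\lambda,\A(t)) + \A(t)R(\lambda,\A(t))\big[\A^{-1}(s)-\A^{-1}(t)\big],
\]
(AT1) and the original (AT2) give $\|\A(t)R(\lambda,\A(t))\A^{-1}(s)\| \le K|\lambda|^{-1} + L\sum_i|t-s|^{\alpha_i}|\lambda|^{\beta_i-1}$. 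To place $\A^{-1}(s)$ there you must commute the group factor through it via (K), i.e.\ $G(a)\A^{-1}(s) = \A^{-1}(s)\,[\A(s)G(a)\A^{-1}(s)]$ with the bracket bounded by Lemma~\ref{lem:Cons}; what then remains on the right is an operator of norm $O(|t-s|^\alpha)$ --- the paper uses precisely the estimate $\|G(h(t)-h(s)) - \A(s)G(h(t)-h(s))\A^{-1}(s)\| \le C|t-s|^\alpha$ coming from Lemma~\ref{lem:Cons}\eqref{Consc}. The product of these two estimates yields the $(\alpha,0)$ term. Your sketch invokes the commutation but misidentifies where the $\lambda$-decay comes from; once that is corrected, the argument goes through along the paper's lines.
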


\begin{proof}
We denote $G_h(t) = G(h(t))$. For all $t\in [0,T]$ and $\lambda
\in\varrho(\A(t))$ we clearly have $\lambda \in\varrho(\A_h(t))$ and
$R(\lambda, \A_h(t)) = G_h^{-1}(t) R(\lambda, \A(t)) G_h(t).$ It
follows from the assumptions on $h$ that for all $t\in [0,T]$,
\[\|R(\lambda, \A_h(t))\|\leq M^2 \|R(\lambda, \A(t))\|,\]
where $M = \sup_{t\in [0,T]} \|G(h(t))\| \vee \|G(-h(t))\|$. Hence
each $\A_h(t)$ is a sectorial operator with the same sector as
$\A(t)$. Thus the operators $\A_h(t)$ satisfy (AT1).

Next we check (AT2). For all $t,s\in [0,T]$ we have
\[\begin{aligned}
\|\A_h(t) & R(\lambda, \A_h(t)) [ \A_h^{-1}(t) - \A_h^{-1}(s)]\|
\\ & = \|G_h^{-1} (t) \A(t) R(\lambda, \A(t)) [\A^{-1}(t) G_h(t) - G_h(t) G_h^{-1}(s) \A^{-1}(s)
G_h(s)]\|
\\ & \leq M \|\A(t) R(\lambda, \A(t)) [\A^{-1}(t) G_h(t) - \A^{-1}(s) G_h(t) ]\|
\\ & \qquad + M \|\A(t) R(\lambda, \A(t)) [\A^{-1}(s) G_h(t) -  G_h(t) G_h^{-1}(s)
\A^{-1}(s) G_h(s)]\|.
\end{aligned}\]
We estimate the two terms on the right-hand side separately. Since
$(\A(t))_{t\in [0,T]}$ satisfies (AT2), it follows for the first term that
\begin{equation}\label{term1}
\begin{aligned}
\|\A(t) R(\lambda, \A(t)) & [\A^{-1}(t) G_h(t) - \A^{-1}(s) G_h(t)] \|
\\ &\leq M\|\A(t) R(\lambda, \A(t)) [\A^{-1}(t) - \A^{-1}(s)  ]\|
\\ & \leq M L \sum_{i=1}^k |t-s|^{\alpha_i} |\lambda|^{\beta_i-1}.
\end{aligned}
\end{equation}
For the second term we have
\begin{equation}\label{rewriteterm2}
\begin{aligned}
\ & \|\A(t) R(\lambda,
\A(t)) [\A^{-1}(s) G_h(t) - G_h(t) G_h^{-1}(s) \A^{-1}(s) G_h(s)]\|
\\ &\qquad \leq
M\|\A(t) R(\lambda, \A(t)) \A^{-1}(s)  [G_h(t) G_h^{-1}(s) - \A(s) G_h(t)
G_h^{-1}(s) \A^{-1}(s)]\|
\\ & \qquad = M\|\A(t) R(\lambda, \A(t)) \A^{-1}(s)  [G(h(t)-h(s)) - \A(s) G_h(t)
G_h^{-1}(s) \A^{-1}(s)]\|.
\end{aligned}
\end{equation}
By an induction argument and Lemma \ref{lem:Cons} as in the proof of
\cite[Theorem 6.30]{DPZ}, the H\"older continuity of $h$ implies that for all
$t,s\in [0,T]$,
\begin{equation}\label{tanabeest}
\|G(h(t) -h(s)) - \A(s) G_h(t) G_h^{-1}(s)\A^{-1}(s)\| \leq M_{\alpha} N |t -
s|^{\alpha}.
\end{equation}
On the other hand it follows from (AT1) and (AT2) that
\begin{equation}\label{AT1AT2comb}
\begin{aligned}
\ & \|\A(t) R(\lambda, \A(t)) \A^{-1}(s)\|
\\ & \qquad \leq \|\A(t)
R(\lambda, \A(t)) [\A^{-1}(s) - \A^{-1}(t)]\| +  \|\A(t) R(\lambda, \A(t))
\A^{-1}(t)\|
\\ & \qquad \leq L \sum_{i=1}^k |t-s|^{\alpha_i} |\lambda|^{\beta_i-1} +
K |\lambda|^{-1}.
\end{aligned}\end{equation}
Combining \eqref{rewriteterm2}, \eqref{tanabeest} and \eqref{AT1AT2comb} gives
\begin{equation}\label{term2}\begin{aligned}
\|\A(t) R(\lambda, \A(t)) & [\A^{-1}(s) G_h(t) - G_h(t) G_h^{-1}(s) \A^{-1}(s)
G_h(s)\| \\ & \leq M L M_{\alpha} N \sum_{i=1}^k |t-s|^{\alpha_i+\alpha}
|\lambda|^{\beta_i-1} + M K M_{\alpha} N |t-s|^{\alpha}
|\lambda|^{-1}\end{aligned}
\end{equation}
We conclude from \eqref{term1}, \eqref{term2}, and the trivial
estimate $|t-s|^{\alpha_i+\alpha} \le C_T|t-s|^{\alpha_i} $ that
\[\big\|\A_h(t)  R(\lambda, \A_h(t)) \big[ \A_h^{-1}(s) - \A_h^{-1}(t)\big]\big\|
\leq \tilde{L} \sum_{i=1}^{k+1} |t-s|^{\alpha_i} |\lambda|^{\beta_i-1}\] for a
certain constant $\tilde{L}$ and $\alpha_{k+1} = \alpha$, $\beta_{k+1} = 0$.
\end{proof}

The main abstract result of this paper reads as follows.

\begin{theorem}\label{mainthm}
Let $E$ be a UMD Banach space and assume that Hypotheses {\rm (H1)},
{\em (H2)}, {\rm (H3)}, and {\rm (H4)} are fulfilled and that {\rm
(AT1)}, {\rm (AT2)}, and {\rm (K)} are satisfied for $C(t) - \mu$ for
some $\mu\in \R$.

\begin{enumerate}
\item The problem \eqref{LinSCP} admits a unique strong solution $U$ on $(0,T]$
for which $AU\in C((0,T];E)$ almost surely.
\item If $u_0\in \D_{A(0)}(1-\sigma,\infty)$ almost surely, then the problem
\eqref{LinSCP} admits a unique strong solution $U$ on $[0,T]$ for which $AU\in
C((0,T];E)$ almost surely. Moreover $AU\in L^p(0,T;E)$ for all $1\leq
p<\sigma^{-1}$.
\item If $u_0\in \D(A(0))$ almost surely, the problem \eqref{LinSCP} admits a
unique strong solution $U$ on $[0,T]$ for which $AU\in C([0,T];E)$ almost
surely.
\end{enumerate}
\end{theorem}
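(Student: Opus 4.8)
The plan is to combine the deterministic existence theory of Section~\ref{sec:AT} with the equivalence of stochastic and deterministic problems established in Theorem~\ref{solequiv}. The key observation is that Proposition~\ref{DPITresult} tells us that, for almost every fixed $\o\in\O$, the time-dependent operators $C_W(t,\o) = G_W^{-1}(t,\o)C(t)G_W(t,\o)$ are exactly of the form $\A_h(t)$ with $\A = C$ and $h = W(\cdot,\o)$. Since Brownian paths are almost surely H\"older continuous of any exponent $\a\in(0,\tfrac12)$, Proposition~\ref{DPITresult} applies pathwise and shows that, after the shift by $\mu$, the operators $C_W(t,\o)-\mu$ satisfy (AT1) and (AT2) for almost every $\o$ (with one extra exponent pair $(\a,0)$, which only affects $\d$). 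Hence Theorem~\ref{thm:AT} provides, for almost every $\o$, a unique classical (resp.\ strict) solution $V(\cdot,\o)$ of the deterministic problem \eqref{detproblem}; one then sets $U := G_W V$ and invokes Theorem~\ref{solequiv} to conclude that $U$ is the desired strong solution of \eqref{LinSCP}.

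The steps I would carry out, in order, are as follows. First, absorb $\mu$: replace $A(t)$ by $A(t)-\mu$ and correspondingly $C(t)$ by $C(t)-\mu$, noting that the substitution $\tilde U(t) = e^{-\mu t}U(t)$ transforms \eqref{LinSCP} with $A(t)$ into \eqref{LinSCP} with $A(t)-\mu$ and leaves the $B_n$ untouched, so without loss of generality $\mu=0$ and $0\in\varrho(C(t))$. Second, fix a version of $W$ with H\"older continuous paths and fix $\o$ in the corresponding set of full measure; apply Proposition~\ref{DPITresult} with $\A=C$, $h=W(\cdot,\o)$ to obtain that $C_W(\cdot,\o)$ satisfies (AT1), (AT2). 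Third, apply Theorem~\ref{thm:AT}: in case~(1) with $x=u_0(\o)\in E = \overline{\D(C_W(0,\o))}$ (density of domains is (H1), transported by the bounded invertible $G_W(0,\o)=I$ since $W(0)=0$, so in fact $C_W(0,\o)=C(0)$), we get a unique classical solution $V(\cdot,\o)$ with $C_W(\cdot,\o)V(\cdot,\o)\in C((0,T];E)$; in case~(2) with $u_0(\o)\in\D_{C(0)}(1-\sigma,\infty)$ — here one should check $\D_{A(0)}(1-\sigma,\infty) = \D_{C(0)}(1-\sigma,\infty)$, which holds because $A(0)$ and $C(0)$ differ by the $C(0)$-bounded perturbation $\tfrac12\sum B_n^2$ and have the same domain by (H3), so the interpolation spaces coincide — we additionally get $C_W V\in L^p(0,T;E)$ for $p<\sigma^{-1}$; in case~(3) with $u_0(\o)\in\D(C(0))=\D(A(0))$ we get a strict solution with $C_W V\in C([0,T];E)$. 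Fourth, check measurability and adaptedness of $\o\mapsto V(\cdot,\o)$: this follows because the Acquistapace--Terreni solution is obtained by an explicit (locally uniformly convergent) construction from the resolvents $R(\l,C_W(t,\o))$, which depend continuously — hence measurably and, since they only involve $W$ up to time $t$, adaptedly — on $\o$; alternatively one argues via uniqueness and a Yosida-type approximation. Fifth, set $U := G_W V$; then $U$ is strongly measurable, adapted and continuous, and Theorem~\ref{solequiv} (the implication (2)$\Rightarrow$(1), with $\e=0$ in cases (2),(3) and $\e>0$ arbitrary in case (1)) shows $U$ is a strong solution of \eqref{LinSCP}. Finally, translate the regularity of $C_W V$ back to $AU$: since $AU = A\,G_W V$ and, by (H4) together with $C_W V = G_W^{-1}CG_W V$, one has $C U = G_W(C_W V)$, so $CU$ inherits the continuity/integrability of $C_W V$ (as $G_W$ is pathwise strongly continuous and bounded on compact time intervals), and then (H4) — which bounds $B_n^2 R(\l,A(t))$ and $B_n^2 R(\l,C(t))$ — converts control of $CU$ into control of $AU = CU + \tfrac12\sum B_n^2 U$; uniqueness of $U$ follows from uniqueness of $V$ via the bijection $V\leftrightarrow G_WV$ of Theorem~\ref{solequiv}.

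The main obstacle I anticipate is the measurability/adaptedness of the pathwise solution map $\o \mapsto V(\cdot,\o)$, since Theorem~\ref{thm:AT} is stated purely deterministically and gives no joint measurability for free. The cleanest route is to exhibit $V$ as a limit of explicitly constructed approximants: the Acquistapace--Terreni evolution family $U_{C_W}(t,s;\o)$ is built from Dunford integrals of $R(\l,C_W(t,\o)) = G_W^{-1}(t,\o)R(\l,C(t))G_W(t,\o)$, and each such integral is a strong limit of Riemann sums involving only $\{W(r,\o): r\le t\}$, hence $\F_t$-measurable; passing to the limit preserves measurability and adaptedness, and $V(t,\o) = U_{C_W}(t,0;\o)u_0(\o)$. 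A secondary, more routine, point is verifying that the various regularity statements about $C_W V$ transfer to $AU$ uniformly in $\o$ on the exceptional-null set — this is where Hypothesis~(H4) does its work and should be written out carefully but presents no real difficulty. The identification $\D_{A(0)}(1-\sigma,\infty)=\D_{C(0)}(1-\sigma,\infty)$ in case~(2) likewise needs a short argument from the coincidence of graph norms of $A(0)$ and $C(0)$ on $\D(A(0))$.
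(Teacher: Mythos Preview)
Your proposal is correct and follows essentially the same route as the paper: reduce to $\mu=0$, apply Proposition~\ref{DPITresult} pathwise using the H\"older continuity of Brownian paths so that Theorem~\ref{thm:AT} yields a unique solution $V$ of \eqref{detproblem}, and then invoke Theorem~\ref{solequiv} to pass to $U=G_WV$, with (H4) handling the transfer of regularity from $C_WV$ to $AU$. The only point where the paper is more specific is the adaptedness argument you flag as the main obstacle: rather than working directly with Dunford integrals, the paper approximates $C_W$ by its Yosida approximations $C_W^{(n)}$, notes that the resulting bounded evolution families $P_W^{(n)}(t,0)$ are $\F_t$-measurable by their Picard-iteration construction, and then cites a convergence result of Batty--Chill \cite[Proposition~4.4]{BaCh} to conclude $P_W(t,0)x=\lim_n P_W^{(n)}(t,0)x$; your alternative via Riemann sums of resolvents would also work but is slightly less clean.
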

\begin{proof}
If $U_{\mu}$ is a solution of \eqref{LinSCP} with $A(t)$ replaced by $A(t) -
\mu$, then it is easy to see that $t\mapsto e^{\mu t} U_{\mu}(t)$ is a
solution of \eqref{LinSCP}. It follows from this that without loss of
generality we may assume that $\mu = 0$ in the assumptions above.

(1): By the standing assumption made in Section \ref{appl}, the initial value
$u_0$ is an $\F_0$-measurable random variable. By Proposition \ref{DPITresult}
and the H\"older continuity of Brownian motion, the operators $C_W(t)$ satisfy
{\rm (AT1)} and {\rm (AT2)}. Hence by Theorem \ref{thm:AT}, almost surely the
problem \eqref{detproblem} admits a unique classical solution $V$.
To see that $V$ is adapted we argue as follows.

Let $(P_W(t,s))_{0\le s\le t\le T}$ be the
evolution system generated by
$(C_W(t))_{0\le t\le T}$, which exists by virtue of (AT1), (AT2), and the results of
\cite{Acq, AT2}. Then $V(t) =  P_W(t,0) u_0$.
Thus we need to check that for each $t\in [0,T]$ the random variable
$P_W(t,0) u_0$ is strongly $\F_t$-measurable.
Since $u_0:\O\to E$ is strongly $\F_0$-measurable we can approximate
$u_0$ almost surely with $\F_0$-measurable simple functions.
In this way the problem reduces to showing that $P_W(t,0) x$
is $\F_t$-measurable for all $x\in E$. One easily checks that the Yosida approximations
$(C_W^{(n)}(s))_{s\in [0,t]}$ of $(C_W(s))_{s\in [0,t]}$
are strongly $\F_t$-measurable in the strong operator topology.
Moreover, in view of (AT1) and (AT2), $C_W^{(n)}$ is almost surely (H\"older)
continuous in the uniform operator topology.
Therefore by the construction of the evolution family
$P_W^{(n)}(u,s)_{0\leq s\leq u\leq t}$
(for instance via the Banach fixed point theorem (cf. \cite{Paz}))
we obtain that $P_W^{(n)}(t,0) x$ is strongly $\F_t$-measurable.
By \cite[Proposition 4.4]{BaCh}, $P_W(t,0) x = \limn P_W^{(n)}(t,0) x$.
This implies
that $P_W(t,0) x$ is strongly $\F_t$-measurable.

Since $V$ has continuous paths almost surely, it
follows that $V$ is strongly measurable. Since continuous functions are
integrable, the solution $V$ is a strong solution on $(0,T]$. Hence by Theorem
\ref{solequiv}, $U=G_W V$ is a strong solution of \eqref{LinSCP} on $(0,T]$.
The pathwise regularity properties of $V$ carry over to $U$, thanks to (H4).
The pathwise uniqueness of $V$ implies the uniqueness of $U$ again via Theorem
\ref{solequiv} and (H4).

(2): If $u_0\in \D_{A(0)}(1-\sigma,\infty)$ almost surely, then it follows
from $AV\in L^p(0,T;E)$ that $V$ is a strong solution of \eqref{detproblem} on
$[0,T]$. Therefore, Theorem \ref{solequiv} implies that $U$ is a strong
solution of \eqref{LinSCP} on $[0,T]$. The pathwise regularity properties of
$V$ carry over to $U$ as before.

(3): If $u_0\in \D(A(0))$ almost surely, then $V$ is a strong solution of
\eqref{detproblem} on $[0,T]$, and from Theorem \ref{solequiv} we see that $U$
is a strong solution of \eqref{LinSCP} on $[0,T]$. The pathwise regularity
properties of $V$ carry over to $U$ as before.
\end{proof}

\begin{remark}
If $(A(t)-\mu_0)_{t\in [0,T]}$ satisfies (AT1) and (AT2) for a certain
$\mu_0\in \R$, then under certain conditions the perturbation result in
\cite[Lemma 4.1]{DLS} may be used to obtain that $(C(t)-\mu)_{t\in [0,T]}$
satisfies (AT1) and (AT2) as well for $\mu$ large enough. In particular, this
is the case if the $(B_n)_{n=1}^N$ are assumed to be bounded.
\end{remark}

\begin{remark}\label{rem:Bnbounded}
Assume $E$ is reflexive (e.g. $E$ is a UMD space). If the $B_n$ are
bounded and commuting and the closed operators $A(t)-\mu_0$ and
$C(t)-\mu_0$ satisfy (AT1), (AT2) for all $\mu_0\in \R$ large
enough, then (H1) - (H4) are fulfilled. It is trivial that (H2) and
(H3) are satisfied. For (H1) one may use Kato's result (cf.
\cite[Section VIII.4]{Yos}) to check the denseness of the domains.
For (H4) notice that for $\lambda>\mu_0$ (AT1) and (AT2) imply that
$t\mapsto R(\lambda, A(t))$ and $t\mapsto R(\lambda, C(t))$ are
continuous (cf. \cite[Lemma 6.7]{Ta2}). Since $B_n$ are assumed to
be bounded this clearly implies (H4).
\end{remark}

\begin{remark}\label{timereg}
Assume the operators $B_1, B_2, \ldots, B_N$ are bounded and commuting. Then
each $e^{t B_n}$ is continuously differentiable, so $G(W)$ is H\"older
continuous with exponent $\mu\in (0,\frac12)$. As a consequence, time
regularity of the solution $V$ of \eqref{detproblem} translates in time
regularity of the solution $U=G(W) V$ of \eqref{LinSCP}. We will illustrate
this in two ways below.

As in \cite[p. 5]{Schn} it can be seen that if almost surely $u_0\in
D((w-A(0)^{\alpha})$ for some $\alpha\in (0,1]$, then almost surely
$V$ is H\"older continuous with parameter $\alpha$. We conclude that
under the condition that almost surely, $u_0\in D((w-A(0))^{\alpha}$
for some $\alpha\in (0,\frac12)$, $U$ is H\"older continuous with
parameter $\alpha$.

Assume $u_0\in D(A(0))$ and $A(0) u_0\in D_A(\alpha, \infty)$ almost
surely for some $\alpha\in (0, \delta]$. Then we deduce from
\cite[Section 6]{AT2} that almost surely, $C_W V$ has paths in
$C^{\alpha}([0,T];E)$. If $\alpha<\frac12$, then we readily obtain,
almost surely, $A U$ has paths in $C^{\alpha}([0,T];E)$.
\end{remark}

We conclude this section with an example. An non-stochastic version
of the example has been studied in \cite{Acq, Schn,Ya}.

\begin{example}
We consider the problem

\begin{equation}\label{eq:easy2}
\begin{aligned}
D_t u(t,x) & = A(t,x, D) U(t,x) + \sum_{n=1}^N B_n(x)U(t,x)D_t W_n(t), \ \
t\in [0,T], x\in S
\\ V(t, x, D) U &= 0, \ \ t\in [0,T], x\in \partial S,
\\ U(0,x) & = u_0(x), \ \ x\in S
\end{aligned}
\end{equation}
Here
\[A(t,x, D) = \sum_{i,j=1}^d a_{ij}(t,x) D_i D_j  + \sum_{i=1}^d q_{i}(t,x) D_i  + r(t,x), \ \ B_n(x) =  b_n(x),\] and
\[V(t,x) = \sum_{i=1}^d v_i(t,x,D) D_i + v_0(t,x).\]
The set $S\subseteq \R^d$ is a bounded domain with boundary of class $C^2$
being locally on one side of $S$ and outer unit normal vector $n(x)$. We
assume that $\partial S$ consists of two closed (possibly empty) disjoint
subsets $\Gamma_0$ and $\Gamma_1$. Moreover the coefficients are real and
$a_{ij}, q_i, r\in C^{\alpha}([0,T], C(\overline{S}))$, where $\alpha\in
(\tfrac{1}{2},1)$ if $\Gamma_1\neq \emptyset$ and $\alpha\in (0,1)$ if
$\Gamma_1 =\emptyset$ and the matrix $(a_{ij}(\cdot,x))_{i,j}$ is symmetric
and strictly positive definite uniformly in time, i.e. there exists an $\nu>0$
such that for all $t\in [0,T]$ we have
\[\sum_{i,j=1}^d a_{ij}(t,x) \lambda_i \lambda_j \geq \nu \sum_{i=1}^d
\lambda_i^2, \ \ x\in S, \lambda\in \R^d.\] The boundary coefficients are
assumed to be real and $v_i, v_0\in C^{\alpha}([0,T], C^{1}(\partial S))$,
$v_0=1$ and $v_i=0$ on $\Gamma_0$ and there is a constant $\delta>0$ such that
for all $x\in \Gamma_1$ and $t\in[0,T]$ we have $\sum_{i=1}^d v_i(t, x) n_i(x)
\geq \delta$. Finally we assume that $b_n\in C^2(\overline{S})$ and
\begin{equation}\label{condb}
\sum_{i=1}^d v_i(t,x) D_i b_n(x) = 0, \ \  t\in [0,T], x\in \partial S.
\end{equation}
Under these assumptions, for all $p\in (1, \infty)$ and $u_0\in
L^0(\O;\F_0;L^p(S))$ there exists a unique strong solution $U$ of
\eqref{eq:easy2} on $(0,T]$ for which $A U\in C((0,T];L^p(S))$ almost surely.

If $u_0\in
B_{p, \infty,\{V\}}^{2(1-\sigma)}(S)$ almost surely for some $\sigma\in (0,1)$
(see \cite[Section 4.3.3]{Trie} for the definition of this space)
then there exists a unique strong
solution $U$ of \eqref{eq:easy2} on $[0,T]$ for which
almost surely $A U\in C((0,T];L^p(S))$
and $AU\in L^q(0,T;L^p(S))$ for all $1\leq q<\sigma^{-1}$.

Furthermore, if almost surely we have $u_0\in W^{2,p}(S)$  and
$V(0,x) u_0 = 0 \ \ x\in \partial S,$
then there exists a unique strong solution $U$ of \eqref{eq:easy2} on $[0,T]$
for which $A U\in C([0,T];L^p(S))$ almost surely.

Finally, we notice that Remark \ref{timereg} can be used to obtain time
regularity of $U$ and $AU$ under conditions on $u_0$.
\end{example}

\begin{proof}
We check the conditions in Theorem \ref{mainthm}, with $\D(A(t)) =
\{f\in W^{2,p}(S): \ V(t,\cdot)f=0 \hbox{ on }\partial S\}. $ If
$\sigma\not=\frac12(1-\frac1p)$ (which can be assumed without loss
of generality by replacing $\sigma$ by a slighly larger value)
$D_{A(0)}(1-\sigma, \infty) = B_{p, \infty,\{V\}}^{2(1-\sigma)}(S)$,
cf. \cite[Theorem 4.3.3]{Trie}.

It is shown in \cite{Schn}
that for $\lambda_0\in \R$ large enough, (AT1) and (AT2) hold for
$A(t)-\lambda_0$ and $C(t)-\lambda_0$, with coefficients $\alpha$ and
$\beta=\tfrac{1}{2}$ in case $\Gamma_1 \neq \emptyset$ and $\beta = 0$ in case
$\Gamma_1 = \emptyset$. Since the operators $B_n$ are bounded, Remark
\ref{rem:Bnbounded} applies and we conclude that (H1)--(H4) hold.

Let $\lambda>\lambda_0$ be fixed. The only thing that is left to be
checked is condition (K) for the operators $C(t)-\lambda$.
It follows from \eqref{condb} that for all $x\in E$,
$B_n R(\lambda, C(t)) x \in \D(C(t))$. For $n=1, 2, \ldots, N$ and
$t\in[0,T]$ define
\[K_n(t) = (C(t)-\lambda) B_n (C(t)-\lambda)^{-1} - B_n.\]
One can check that $K_n(t) = [C(t), B_n]R(\lambda, C(t))$,
where $[C(t), B_n]$ is the commutator of $C(t)$ and $B_n$.
Since $[C(t), B_n]$
is a first order operator, each $K_n(t)$ is a bounded operator.
To prove their uniform boundedness in $t$, we note that
from the assumptions on the coefficients it follows that there are
constants $C_1, C_2>0$ such that for all $t\in [0,T]$ and $j=1, \ldots, d$,
\[\|R(\lambda, C(t))\|\leq C_1 \ \text{and} \  \|D_j R(\lambda, C(t))\|\leq C_2.\]
Indeed, the first estimate is obviously true, and the second one follows from
the Agmon-Douglis-Nirenberg estimates (see \cite{ADN}).
\end{proof}

\section{The deterministic problem: Tanabe conditions}\label{sec:T}

In the theory for operators $\A(t)$ with time-independent domains $\D(\A(t))
=: \D(\A(0))$ (cf. \cite[Section 5.2]{Ta1}, see also \cite{Am,Lun,Paz}),
condition (AT2) is often replaced by the following stronger condition, usually
called the Tanabe condition,
\begin{enumerate}
\item[(T2)] There are constants $L\geq 0$ and $\mu\in (0,1]$
such that for all $t,s\in [0,T]$ we have
\[\|\A(t)\A^{-1}(0) - \A(s)\A^{-1}(0)\|\leq L |t-s|^{\mu}.\]
\end{enumerate}
It is shown in \cite{Ta1} that condition (T2) implies that there is a constant
$\tilde{L}\geq 0$, such that for all $t,s,r\in [0,T]$ we have
\begin{equation}\label{T2extra}
\|\A(t)\A^{-1}(r) - \A(s)\A^{-1}(r)\|\leq \tilde{L} |t-s|^{\mu}.
\end{equation} In particular the family $\{\A(s) \A^{-1}(t):s,t\in [0,T]\}$
is uniformly bounded.

It is clear that under (H1) and (H3), the operators $A(t)$ satisfy
(T2) if and only if the operators  $C(t)$ satisfy (T2).

\begin{lemma}\label{lem:forH4}
Assume {\rm (H1)}, {\rm (H3)} and that $\D(A(t)) = \D(A(0))$. If $(A(t))_{t\in
[0,T]}$ and $(C(t))_{t\in [0,T]}$ satisfy {\rm (AT1)} and {\rm (T2)}, then
{\rm (H4)} holds.
\end{lemma}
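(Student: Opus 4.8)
The plan is to prove the stronger statement that the four operator-valued functions $t\mapsto B_n^2 R(\lambda,A(t))$ and $t\mapsto B_n^2 R(\lambda,C(t))$ are continuous in the \emph{operator norm}, for the choice $\lambda=0$. First I would observe that this choice is legitimate: since $(A(t))_{t\in[0,T]}$ and $(C(t))_{t\in[0,T]}$ both satisfy (AT1), we have $0\in\overline{S_\theta}\subseteq\varrho(A(t))\cap\varrho(C(t))$ for every $t$, so $A^{-1}(t),C^{-1}(t)\in\calL(E)$; moreover $\D(C(t))=\D(A(t))=\D(A(0))$ by (H1), (H3) and the standing hypothesis. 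Hence (H4) will follow once it is shown that $t\mapsto B_n^2 A^{-1}(t)$ and $t\mapsto B_n^2 C^{-1}(t)$ are strongly continuous, because $R(0,A(t))=-A^{-1}(t)$ and $R(0,C(t))=-C^{-1}(t)$.

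Next I would dispose of the boundedness issue. Since $A^{-1}(0)$ maps $E$ into $\D(A(0))$, which by (H3) is contained in $\D(B_n^2)$, and since $B_n$ is closed (being the generator of a $C_0$-group), the operator $B_n^2$ is closed; therefore the everywhere-defined operator $S_n:=B_n^2A^{-1}(0)$ is closed and hence bounded by the closed graph theorem. The same argument, using $\D(C(0))=\D(A(0))$, gives $T_n:=B_n^2C^{-1}(0)\in\calL(E)$.

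The main step is the norm continuity of $t\mapsto A(0)A^{-1}(t)$. By (T2) and its consequence \eqref{T2extra}, the family $\{A(s)A^{-1}(t):s,t\in[0,T]\}$ is uniformly bounded, say by $M$; in particular $A(t)A^{-1}(0)$ is invertible with bounded inverse $A(0)A^{-1}(t)$. Applying the elementary identity $P^{-1}-Q^{-1}=-P^{-1}(P-Q)Q^{-1}$ with $P=A(t)A^{-1}(0)$ and $Q=A(s)A^{-1}(0)$, together with (T2), one gets
\[\|A(0)A^{-1}(t)-A(0)A^{-1}(s)\|\le M^2\,\|A(t)A^{-1}(0)-A(s)A^{-1}(0)\|\le M^2 L\,|t-s|^{\mu}.\]
Since the range of $A^{-1}(t)$ is $\D(A(0))\subseteq\D(B_n^2)$ and $A^{-1}(0)A(0)$ is the identity on $\D(A(0))$, one may factor $B_n^2A^{-1}(t)=\big(B_n^2A^{-1}(0)\big)\big(A(0)A^{-1}(t)\big)=S_n\,A(0)A^{-1}(t)$, which exhibits $t\mapsto B_n^2A^{-1}(t)$ as a fixed bounded operator composed with the norm-continuous family $t\mapsto A(0)A^{-1}(t)$; hence it is norm continuous.

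Finally I would run the identical argument with $C(t)$ in place of $A(t)$: by hypothesis $(C(t))_{t\in[0,T]}$ satisfies (AT1) and (T2), $C^{-1}(0)\in\calL(E)$, and $\D(C(t))=\D(A(0))\subseteq\D(B_n^2)$, so $t\mapsto C(0)C^{-1}(t)$ is norm continuous and $B_n^2C^{-1}(t)=T_n\,C(0)C^{-1}(t)$ is norm continuous. Thus both $t\mapsto B_n^2R(0,A(t))$ and $t\mapsto B_n^2R(0,C(t))$ are norm continuous, a fortiori strongly continuous, so (H4) holds with $\lambda=0$. The only non-routine ingredient is the norm continuity of $t\mapsto A(0)A^{-1}(t)$, which rests on (T2) and the uniform boundedness assertion accompanying \eqref{T2extra}; everything else is the closed graph theorem and the elementary factorization.
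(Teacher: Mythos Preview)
Your proof is correct and follows essentially the same line as the paper's: both arguments take $\lambda=0$, use the closed graph theorem to get $B_n^2 A^{-1}(0)$ bounded, factor $B_n^2 A^{-1}(t) = \big(B_n^2 A^{-1}(0)\big)\big(A(0)A^{-1}(t)\big)$, and show the second factor is H\"older continuous from (T2) and the uniform boundedness of $\{A(s)A^{-1}(t)\}$. The only cosmetic difference is that you obtain the H\"older estimate for $A(0)A^{-1}(t)$ via the resolvent-type identity $P^{-1}-Q^{-1}=-P^{-1}(P-Q)Q^{-1}$ applied directly to (T2), whereas the paper routes through \eqref{T2extra}; one small caution is that ``$B_n$ closed $\Rightarrow B_n^2$ closed'' is not automatic in general, but it does hold here because $B_n$ has nonempty resolvent set as a group generator.
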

\begin{proof}
Since $\D(A(0)) \subseteq \D(B_n^2)$ and $0\in \varrho(A(0))$, there is a
constant $C_n$ such that $\|B_n^2 x\|\leq C_n\|A(0) x\|$ for all $x\in
D(A(0))$. It follows from the uniform boundedness of $\{A(0) A^{-1}(t):t\in
[0,T]\}$ and \eqref{T2extra} that for all $t,s\in [0,T]$ we have
\[\begin{aligned}
\|B_n^2 A^{-1}(t) - B_n^2 A^{-1}(s)\|
& \leq C_n \|A(0) A^{-1}(t) - A(0)A^{-1}(s)\|
\\ & \leq C_n C \|(A(t)A^{-1}(t) - A(t)A^{-1}(s))\|
\\ & \leq C_n C \|(A(s)A^{-1}(s) - A(t)A^{-1}(s))\|
\leq C_n C \tilde{L}|t-s|^{\mu}.
\end{aligned}\]
This shows that $t\mapsto B_n^2 A^{-1}(t)$ is H\"older continuous. In the same
way one can show that $t\mapsto B_n^2 C^{-1}(t)$ is H\"older continuous. We
 conclude that (H4) holds.
\end{proof}

It is easy to see that the statement in Proposition \ref{DPITresult} holds as
well with (AT2) replaced by (T2) (in the assumption and the assertion).
Thus in the case where the domains $\D(A(t))$ are constant,
the more difficult Acquistapace-Terreni theory is not needed.

If the operators $B_1, \ldots, B_N$ are bounded we have the following
consequence of Theorem \ref{mainthm}. Note that the assumptions are made on the operators $A(t)$ rather than on $C(t)$.

\begin{proposition}\label{prop:bdd}
Let $E$ be a UMD space and $\D(A(t)) = \D(A(0))$. Assume that the operators
$A(t)-\lambda$ satisfy {\rm (AT1)} and {\rm (T2)} for all $\lambda\in \R$ large
enough, and let $B_1, \ldots, B_N\in \calL(E)$ be bounded commuting operators which leave $\D(A(0))$ invariant. Consider the problem
\begin{equation}\label{bounded}
\begin{aligned}
d U(t) &= A(t) U(t) \, dt + \sum_{n=1}^N B_n U(t) d W_n(t), \ \ t\in [0,T],
\\ U(0) & = u_0.
\end{aligned}
\end{equation}
\begin{enumerate}
\item If $u_0\in E$ almost surely, the problem \eqref{bounded} admits a unique
strong solution $U\in C([0,T];E)$ on $(0,T]$ for which $A U\in C((0,T];E)$.
\item If $u_0\in \D_{A(0)}(1-\sigma,\infty)$ almost surely, then the problem
\eqref{LinSCP} admits a unique strong solution $U\in C([0,T];E)$ on $[0,T]$
with $AU\in C((0,T];E)$. Moreover $AU\in L^p(0,T;E)$ for all $1\leq
p<\sigma^{-1}$.
\item If $u_0\in \D(A)$ almost surely, the problem \eqref{LinSCP} admits a unique
strong solution $U\in C([0,T];E)$ on $[0,T]$ for which $AU\in C([0,T];E)$.
\end{enumerate}
\end{proposition}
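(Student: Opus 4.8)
The plan is to obtain the proposition as a corollary of Theorem~\ref{mainthm}, so the entire task is to check that the hypotheses of that theorem --- {\rm (H1)}--{\rm (H4)} together with {\rm (AT1)}, {\rm (AT2)}, {\rm (K)} for $C(t)-\mu$ for some $\mu\in\R$ --- hold in the present situation. First I would fix $\mu\in\R$ large enough that, for every $t\in[0,T]$, $0\in\varrho(A(t)-\mu)\cap\varrho(C(t)-\mu)$ and {\rm (AT1)} holds for $A(t)-\mu$; by the reduction to $\mu=0$ at the start of the proof of Theorem~\ref{mainthm} such a shift costs nothing, so I may henceforth assume $0\in\varrho(A(t))\cap\varrho(C(t))$ and that {\rm (AT1)}, {\rm (T2)} hold for $(A(t))_{t\in[0,T]}$. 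I would then record once and for all that, under the standing assumption $\D(A(t))=\D(A(0))$, condition {\rm (T2)} implies {\rm (AT2)}: for $x\in E$ and $\lambda\in S_\theta$ one has $A(t)R(\lambda,A(t))[A^{-1}(t)-A^{-1}(s)]x=R(\lambda,A(t))\big(A(s)-A(t)\big)A^{-1}(s)x$, whence $\|A(t)R(\lambda,A(t))[A^{-1}(t)-A^{-1}(s)]\|\le \|R(\lambda,A(t))\|\,\|I-A(t)A^{-1}(s)\|\le K\tilde L\,|t-s|^{\eta}|\lambda|^{-1}$, using {\rm (AT1)} for the first factor and \eqref{T2extra} for the second, where $\eta\in(0,1]$ is the Tanabe exponent; this is {\rm (AT2)} with $k=1$ and $(\alpha_1,\beta_1)=(\eta,0)$.

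Next I would dispose of {\rm (H1)}--{\rm (H4)} by invoking Remark~\ref{rem:Bnbounded}. Its hypotheses require that the shifted operators $A(t)-\mu_0$ and $C(t)-\mu_0$ satisfy {\rm (AT1)}, {\rm (AT2)} for all large $\mu_0$; for $A(t)-\mu_0$ this holds by the previous paragraph, and for $C(t)-\mu_0$ it follows from standard bounded-perturbation theory, since $C(t)-A(t)=-\tfrac12\sum_n B_n^2$ is bounded, so a sufficiently large resolvent shift restores sectoriality (hence {\rm (AT1)}), while {\rm (T2)} for $(C(t))$ is equivalent to {\rm (T2)} for $(A(t))$ --- as observed just before Lemma~\ref{lem:forH4} --- and {\rm (T2)} again yields {\rm (AT2)}. (Alternatively, {\rm (H2)} and {\rm (H3)} are immediate, since the bounded commuting $B_n$ generate uniformly continuous commuting groups $e^{tB_n}$ and $\D(A(t))\subseteq E=\D(B_n^2)$, and {\rm (H4)} follows directly from Lemma~\ref{lem:forH4}.)

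The one genuinely new ingredient --- and the step I expect to be the main obstacle --- is verifying condition {\rm (K)} for $C(t)-\mu$; this is exactly where the hypothesis that each $B_n$ leaves $\D(A(0))$ invariant enters. For $x\in E=\D(B_n)$ one has $(C(t)-\mu)^{-1}x\in\D(C(t))=\D(A(0))$, hence $B_n(C(t)-\mu)^{-1}x\in\D(A(0))=\D(C(t))$, so that $(C(t)-\mu)B_n(C(t)-\mu)^{-1}x=B_nx+K_n(t)x$ with $K_n(t):=[C(t),B_n](C(t)-\mu)^{-1}$, and since the $B_m$ commute, $[C(t),B_n]=[A(t)-\tfrac12\sum_m B_m^2,B_n]=[A(t),B_n]$. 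It then remains to establish the uniform-in-$t$ boundedness of the $K_n(t)$, which I would argue as follows: invariance of $\D(A(0))$ under $B_n$ makes $A(0)B_nA(0)^{-1}$ everywhere defined and closed, hence bounded by the closed graph theorem; consequently $[A(t),B_n]A(0)^{-1}=\big(A(t)A(0)^{-1}\big)\big(A(0)B_nA(0)^{-1}\big)-B_n\big(A(t)A(0)^{-1}\big)$ is bounded uniformly in $t$, because $\{A(t)A(0)^{-1}:t\in[0,T]\}$ is uniformly bounded by \eqref{T2extra}; finally $K_n(t)=\big([A(t),B_n]A(0)^{-1}\big)\big(A(0)(C(t)-\mu)^{-1}\big)$, and $A(0)(C(t)-\mu)^{-1}$ is uniformly bounded by a routine relative-boundedness estimate that uses $\D(C(t))=\D(A(0))$ and {\rm (AT1)}. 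This produces uniformly bounded $K_n:[0,T]\to\calL(E)$, that is, {\rm (K)}. With {\rm (H1)}--{\rm (H4)} and {\rm (AT1)}, {\rm (AT2)}, {\rm (K)} all verified for $C(t)-\mu$, Theorem~\ref{mainthm} applies, and its conclusions (1), (2), (3) are precisely the assertions (1), (2), (3) of the proposition (the requirement $U\in C([0,T];E)$ is part of the definition of a strong solution, and $\D(A)$ in (3) denotes the common domain $\D(A(0))$). Essentially all the work therefore sits in {\rm (K)}; every other verification is routine bookkeeping with bounded perturbations and resolvent shifts, most of it already isolated in Remark~\ref{rem:Bnbounded} and Lemma~\ref{lem:forH4}.
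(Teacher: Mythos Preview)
Your proposal is correct and follows essentially the same route as the paper: verify the hypotheses of Theorem~\ref{mainthm} by invoking Remark~\ref{rem:Bnbounded} and Lemma~\ref{lem:forH4} for (H1)--(H4), use bounded perturbation and the observation that (T2) for $A(t)$ is equivalent to (T2) for $C(t)$ to get (AT1)/(AT2) for $C(t)-\mu$, and then check (K) via the closed graph theorem and the uniform boundedness coming from \eqref{T2extra}. The only cosmetic difference is that the paper bounds $C(t)B_nC^{-1}(t)$ directly by sandwiching with $C(0)C^{-1}(t)$ and $C(t)C^{-1}(0)$, whereas you first reduce the commutator to $[A(t),B_n]$ and factor through $A(0)$; both arguments rest on the same two ingredients.
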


\begin{proof}
We check the conditions of Theorem \ref{mainthm}. It follows from Remark
\ref{rem:Bnbounded} that (H1), (H2) and (H3) are satisfied. Lemma
\ref{lem:forH4} implies that (H4) is satisfied.

By the bounded perturbation theorem, for $\lambda\in \R$ large enough the
operators  $C(t)-\lambda=A(t)-\frac{1}{2} \sum_{n=1}^N B_n^2-\lambda$ satisfy
(AT1). Hence for $\lambda$ large enough, condition (T2) for the operators
$C(t)-\lambda$ follows from (T2) for the operators $A(t)-\lambda$.

Finally to check (K), by the assumption on the
operators $B_n$ we have $\D(A(0))= \D(C(0))$, and
by the closed graph theorem we have $\|B_n x\|_{\D(C(0))}\leq
c_n\|x\|_{\D(C(0))}$ for some constant $c_n$. This implies that $\|C(0) B_n
x\|\leq c_n\|C(0)x\|$. We check that the operators $K_n(t) = C(t) B_n
C^{-1}(t) - B_n$ are uniformly bounded. By the remark
following \eqref{T2extra}, the family $\{C(0) C^{-1}(t): t\in [0,T]\}$ is uniformly bounded, say by some constant $k$, and  therefore
\[
\bal
\|C(t) B_n C^{-1}(t)\| & \leq \|C(t) C^{-1}(0) C(0) B_n C^{-1}(0) C(0) C^{-1}(t)\|\\ & \leq k^2 \|C(0) B_n C^{-1}(0)\|\leq c_n.
\eal\]
\end{proof}

Next we return to the problem \eqref{eq:zakai} discussed at the beginning of the paper.
\begin{example}\label{ex:zakai}
We consider the problem
\begin{equation}\label{Zakai}
\begin{aligned}
D_t u(t,x) & = A(t,x, D) U(t,x) + B(x,D) D_t W(t), \ \ t\in [0,T], x\in \R^d
\\ U(0,x) & = u_0(x), \ \ x\in \R^d
\end{aligned}
\end{equation}
Here
\[
\bal A(t,x, D) &= \sum_{i,j=1}^d a_{ij}(t,x) D_i D_j  + \sum_{i=1}^d q_{i}(t,x) D_i  + r(t,x), \\
 B(x,D) & =  \sum_{i=1}^d b_{i}(x) D_i + c(x).
\eal\] All coefficients are real-valued and we take $a_{ij}, q_i, r$
uniformly bounded in time with values in $C_b^1(R^d))$. The
coefficients $a_{ij}, q_i$ and $r$ are $\mu$-H\"older continuous in
time for some $\mu\in (0,1]$, uniformly in $\R^d$. Furthermore we
assume that the matrices $(a_{ij}(t,x))_{i,j}$ are symmetric, and
there exists a constant $\nu>0$ such that for all $t\in [0,T]$
$$
\sum_{i,j=1}^d \Big(a_{ij}(t,x) -\frac12 b_i(x) b_j(x)\Big) \lambda_i
\lambda_j \geq \nu \sum_{i=1}^d \lambda_i^2, \ \ x\in \R^d, \lambda\in \R^d.
$$
Finally, we assume that $b_i,c\in C^2_b(\R^d)$. Under these
assumptions it follows from Theorem \ref{mainthm} that for all $p\in (1,
\infty)$ and $u_0\in L^0(\O, \F_0;L^p(\R^d))$, there exists a unique
strong solution $U$ of \eqref{Zakai} on $(0,T]$ with paths in
$C([0,T];L^p(\R^d))\cap C((0,T];W^{2,p}(\R^d))$.
If moreover $u_0\in B_{p, \infty}^{2(1-\sigma)}(\R^d)$
almost
surely, then there exists a unique strong solution $U$ of
\eqref{eq:easy2} on $[0,T]$ for which $U\in C((0,T];W^{2,p}(\R^d))$
almost surely and $AU\in L^q(0,T;L^p(\R^d))$ for all $1\leq
q<\sigma^{-1}$. If $u_0\in W^{2,p}(\R^d)$ almost surely, then there
exists a unique strong solution $U$ of \eqref{Zakai} on $[0,T]$ with
paths in $C^{\alpha}([0,T]; L^p(\R^d)) \cap C([0,T];W^{2,p}(\R^d))$
for all $\alpha\in (0,\tfrac{1}{2})$.
\end{example}

In \cite{Brz1}, for $A(t)\equiv A$ a strong solution on $[0,T]$ with paths in
$L^2(0,T;W^{2,p}(\R^d))$ almost surely is obtained for initial data satisfying
$u_0\in  B_{p, 2}^{1}(\R^d)$ almost surely.
In \cite{Kry} it
is assumed that $u_0\in H_p^{2-\frac{2}{p}}(\R^d)$ and a solution is obtained
with paths in $L^p(0,T;W^{2,p}(\R^d))$ almost surely.

\begin{proof}
Let $E= L^p(\R^d)$, where $p\in (1, \infty)$. Let $\D(A(t)) = W^{2,p}(\R^d)$
and $A(t) f)= A(t, \cdot, D) f$ for all $t\in [0,T]$. Let $\D(B_0) =
W^{1,p}(\R^d)$ and $B_0 f= B(\cdot, D) f$, and let $(B, D(B))$ be the closure
of $(B_0, D(B_0))$. Note that by real interpolation we have
$B_{p, \infty}^{2(1-\sigma)}(\R^d) = D_A(1-\sigma, \infty)$, see \cite{Trie}.

We check the conditions of Theorem \ref{mainthm}. We begin with the Hypotheses
(H1)-(H3). That (H1) holds is clear, and  (H2) follows as in \cite[Example
C.III.4.12]{Ar}. Finally (H3) follows from $\D(A(t)) \subseteq \D(B^2)$.

The operators $A(t)-\lambda$ and $C(t)-\lambda$
satisfy condition (AT1) for all $\lambda\in \R$ large enough (cf.
\cite[Section 3.1]{Lun}). Furthermore it can be checked that $A(t)-\lambda$ and
$C(t)-\lambda$ satisfy (T2). Now Condition (H4) follows from
\eqref{lem:forH4}.

To check (K) for the operators $C(t)-\lambda$, put $K(t) = [C(t), B]R(\lambda, C(t))$.  Since the third order derivatives in the commutator $[C(t), B]$ cancel and
$a_{ij}(t), q_i(t), r(t)\in C_b^1(R^d)$ and $b_i,c\in C^2_b(\R^d)$,
the operators $K(t)$ are bounded for each $t\in [0,T]$. Moreover,
\[K(t) = [C(t), B]R(\lambda, C(t)) = [C(t)-\lambda, B]R(\lambda, C(t))=(C(t)-\lambda) B (C(t)-\lambda)^{-1} +B\]
on $W^{1,p}(\R^d)$, and this identity extends to $\D(B)$ (see
\cite[Proposition A.1]{ATito}). To check that $K$ is uniformly bounded, note
that by the uniform boundedness of the family $(\lambda - C(0)) R(\lambda,
C(t))$ it suffices to check that there is a constant $C$ such that for all
$t\in [0,T]$ and $f\in W^{2,p}(\R^d)$,
\[\|[C(t), B] f\| \leq C\|f\|_{W^{2,p}(\R^d)}.\]
But this follows from the assumptions $a_{ij}, q_i, r\in \mathcal{L}^{\infty}([0,T];C_b^1(R^d))$
and $b_i,c\in C^2_b(\R^d)$.

Finally, we show that if $u_0\in W^{2,p}(\R^d)$ almost surely, then
$U$ has paths in $C^{\alpha}([0,T];L^p(\R^d))$ for all $\alpha\in
(0,\tfrac{1}{2})$. One can check that for all $x\in D(A(0))$, $G(t)
x$ is continuously differentiable and there are constants $C_1, C_2$
such that for all $x\in D(A(0))$ and $s,t\in [0,T]$,
\[\|G(t) x - G(s)x\|\leq C_1|t-s| \|x\|_{\D(A(0))}\leq C_2|t-s| \|x\|_{\D(C_W(0))}.\]
On the other hand it follows from Theorem \ref{thm:AT} that \eqref{detproblem}
has a unique strict solution $V$. It follows that there exist maps $M,
M_\alpha:\O\to \R$ such that all for $s, t\in [0,T]$
\[\begin{aligned}
\|U(t) - U(s)\|& \leq \|G_W(t) V(t) - G_W(s) V(s)\| \\ &  \leq \|G_W(t) V(t) -
G_W(t) V(s)\| + \|G_W(t) V(s) - G_W(s) V(s)\| \\ &   \leq M \|V(t)-V(s)\| +
M_{\alpha} |t-s|^\alpha \|V(s)\|_{\D(C_W(0))}. \end{aligned}\] The first term
can be estimated because $V$ is continuously differentiable. We already
observed that $(C_W(s)-\mu)_{s\in [0,T]}$ satisfies (T2) for $\mu$ large. In
particular $(C_W(0) - \mu)(C_W(s)-\mu)^{-1}$ is uniformly bounded in $s\in
[0,T]$. Since $s\mapsto C_W(s) V(s)$ and $V$ are uniformly bounded, we
conclude that $\|V(s)\|_{\D(C_W(0))}$ is uniformly bounded. The result follows
from this.
\end{proof}

\section{Wong-Zakai approximations}

As has been shown in \cite{BCF} for a related class of problems in a
Hilbert space setting, the techniques of this paper can be used to prove
Wong-Zakai type approximation results for the problem \eqref{eq:abstr-zakai},
$$\begin{aligned}
d U(t) &= A(t) U(t) dt + B U(t)\, d W(t), \qquad t\in [0,T],
\\ U(0) & = u_0.
\end{aligned}
$$
and possible generalizations for time-dependent operators $B(t)$.
We shall briefly sketch the main idea and defer the details to a forthcoming
publication.

Let $W^{(n)}$ be adapted processes with $C^1$ trajectories
such that almost surely, $\limn W_n = W$ uniformly on $[0,T]$ and consider
the problem
\begin{equation}\label{eq:abstr-zakai-approx}
\begin{aligned}
d U_n(t) &= (A(t)-\frac12B^2) U_n(t) dt + B U_n(t)\, d W_n(t), \qquad t\in [0,T],
\\ U(0) & = u_0.
\end{aligned}
\end{equation}
This equation may be solved path by path as follows.
Under the assumptions made in Section \ref{appl} and using the notations
introduced there,
define $$C_{W_n}(t,\o) := G^{-1}(W_n(t,\o))C(t)  G(W_n(t,\o))
$$
and consider the pathwise deterministic
problem
\begin{equation}\label{detproblem-approx}
\begin{aligned}
V_n'(t) & = C_{W_n}(t) V_n(t), \ \ t\in[0,T],
\\ V_n(0) &= u_0.
\end{aligned}
\end{equation}
Arguing as in the proof of Theorem \ref{solequiv},  $U_n := G(W_n)V_n$ is a strong
solution of \eqref{eq:abstr-zakai-approx} if and only if $V_n$ is a strong
solution of \eqref{detproblem-approx}, the difference being that instead of the
It\^o formula the ordinary chain rule is applied; this accounts for the loss of
a factor $\frac12B^2$.

In analogy to \cite[Theorems 1 and 2]{BCF},
under suitable conditions on the operators $A(t)$ and $B$ such as given in Sections
\ref{sec:AT} and \ref{sec:T} it can be shows that
$\limn V_n = V$ almost surely, where $V$ is the strong solution of
\eqref{detproblem} and the almost sure convergence takes place in the functional space to
which the trajectories of $V$ belong. It follows that
$\limn U_n = U$ almost surely,  where $U$ is the strong solution of
\eqref{eq:abstr-zakai} and again the almost sure convergence takes place in the functional space to
which the trajectories of $U$ belong.

\medskip

{\em Acknowledgment} -- The authors thank Roland Schnaubelt for useful
discussions which clarified some technical issues connected with
the Acquistapace-Terreni conditions, and
the anonymous referee for the detailed suggestions which led to some
improvements in the presentation.

\def\cprime{$'$}
\providecommand{\bysame}{\leavevmode\hbox
to3em{\hrulefill}\thinspace}

\end{document}